\documentclass[11pt, leqno, twoside]{article}

\usepackage{amssymb}
\usepackage{amsmath}
\usepackage{amsthm}
\usepackage{amsfonts}
\usepackage{mathrsfs}
\usepackage{indentfirst}
\usepackage{graphicx}
\usepackage{txfonts}

\usepackage{anysize}

\usepackage{fancyhdr}

\usepackage{color}
\usepackage{setspace}

\usepackage{hyperref}
\hypersetup{colorlinks=true, linkcolor=blue, citecolor=blue}

\allowdisplaybreaks

\usepackage{txfonts}

\def\rr{{\mathbb R}}
\def\rn{{{\rr}^n}}

\def\zz{{\mathbb Z}}

\def\nn{{\mathbb N}}

\def\cm{{\mathcal M}}

\def\cd{{\mathcal D}}

\def\ch{{\mathcal H}}

\def\fz{\infty}

\def\dz{\delta}

\def\lz{\lambda}

\def\lf{\left}
\def\r{\right}

\def\ls{\lesssim}
\def\gs{\gtrsim}

\def\noz{\nonumber}
\def\wz{\widetilde}

\def\cs{{\mathcal S}}

\def\dlimsup{\displaystyle\limsup}

\def\loc{{\mathop\mathrm{\,loc\,}}}

\def\q1{\wz q}
\def\Q1{q_1}

\def\loc{{\mathop\mathrm{loc\,}}}

\newtheorem{thm}{Theorem}[section]
\newtheorem{prop}[thm]{Proposition}
\newtheorem{lem}[thm]{Lemma}
\newtheorem{cor}[thm]{Corollary}

\theoremstyle{definition}
\newtheorem{defn}[thm]{Definition}
\newtheorem{rem}[thm]{Remark}

\numberwithin{equation}{section}

\begin{document}
\title{\bf\Large Weak-$L^\infty$ and BMO with Hausdorff Content
\footnotetext{\hspace{-0.35cm}
2020 {\it Mathematics Subject Classification}.
{Primary 42B25.} \endgraf
{\it Key words and phrases.} Hausdorff content, Marcinkiewicz interpolation, Decreasing rearrangement,
Weak-$L^\infty$ space, BMO space. \endgraf}}
\author{Hao Guo, Long Huang, Yangzhi Zhang\footnote{Corresponding author,\
E-mail: yzzhang@hunnu.edu.cn/{\color{red}{October 23, 2026}}}\ \
and Ciqiang Zhuo
}
\date{ }
\maketitle
	
\vspace{-0.6cm}

\begin{center}
\begin{minipage}{13cm}
{\small {\bf Abstract}\quad
The space of weak-$L^{\infty}$ on measure space was introduced by Bennett et al. (Ann. of Math., 113(1981), 601-611)
and proved to contain the space of $L^\infty$ and possess the appropriate interpolation property.
In this paper, we are aimed to continue this line of research on the Euclidean space $\mathbb R^n$ endowed with
Hausdorff content, which is a low-dimensional outer measure on $\mathbb{R}^n$, and fails to be additive or absolutely continuous with respect to Lebesgue measures.
But low-dimensional subsets, such as fractals, of $\mathbb{R}^n$ are Hausdorff capacitable.
Precisely, in the setting of Hausdorff content, an analogue of weak-$L^\infty$ is defined and proved to
possess an improved Marcinkiewicz interpolation theorem.
Moreover, we show that the space of functions with bounded mean oscillation associated to the Hausdorff content
is embedded into this new weak-$L^\infty$ space by establishing a key estimate for the distribution via a packing condition.
  As a by-product,
 the boundedness of the Hardy-Littlewood operator on the space of weak-$L^{\infty}$ is established.}
\end{minipage}
\end{center}

\vspace{0.2cm}

\section{Introduction}
C. Bennett, R.A. Devore and R. Sharpley in \cite{BDS-1} introduced the weak-$L^\fz$ space and studied its relation to the space of functions of
bounded mean oscillation. The motivation for introducing such weak-$L^\fz$ space comes from
the probable limitation of the classical Marcinkiewicz interpolation theorem
associated to an operator $T$, which is required to be bounded from $L^\fz$ to $L^\fz$ and, consequently, fails
to apply directly on operators that are unbounded on $L^\fz$ (e.g. Hilbert transform).

Following \cite{BDS-1}, the weak-$L^\fz$ space $wL^\fz(X)$ with a give measure space $(X,\mu)$ is
defined to be the collection of all measurable
functions $f$ on $X$ whose decreasing rearrangement $f^\ast$ is finite on $(0,\fz)$ and satisfies
$$\|f\|_{wL^\fz(X)}:=\sup_{t\in(0,\fz)}\lf[\frac1t \int_0^t f^\ast(s)\,ds-f^\ast(t)\r]<\fz,$$
where $f^\ast(t):= \inf\{\lz\in(0,\fz):\ \mu(\{x\in X:\ |f(x)|>\lz\})\le t\}$.
This space properly contains $L^\fz$, especially, when $X=\rn$ and $\mu$ is the Lebesgue measure.
Although $wL^\fz(X)$ is not even a linear space, it possesses a improved Marcinkiewicz interpolation property
(see \cite[Theorem 2.2]{BDS-1}).
Precisely, let $T$ be a linear operator $T$ of restricted weak type $(1,1)$ and of restricted weak type $(\fz,\fz)$,
namely, $T$ maps all characteristic functions $f=\textbf{1}_E$ of sets $E\subset X$
with $\mu(E)<\fz$ from $L^1(X)$ and $L^\fz(X)$, respectively, into
weak-$L^1$ space $wL^1(X)$ and weak-$L^\fz$ space $wL^\fz(X)$.
Then $T$ has a unique extension to a bounded operator on $L^p(X)$ with $p\in(1,\fz)$.
This interpolation theorem applies well to operators which are not bounded from $L^\fz(X)$ into $L^\fz(X)$, such as the Hilbert transform as well as the maximal Hilbert transform and,
more generally to the maximal operators associated with arbitrary Calder\'on-Zygmund singular integrals (see \cite{BDS-1}).

Moreover, let $X$ be specified to a cube $Q\subset \rn$ (with sides parallelling to the coordinate axes)
endowed with the Lebesgue measure. Then the weak-$L^\fz$ space $wL^\fz(Q)$ provides a characterization of the class BMO($Q$) of functions of bounded mean oscillation on the cube $Q$ (see \cite[Theorem 3.1]{BDS-1}).
Particularly, the following conclusion holds true: for any given cube $Q$ of $\rn$,
\begin{equation}\label{estimate-1}
\|f\|_{wL^\fz(Q)}\le C_{(n)} \lf\{\|f\|_{{\rm BMO}(Q)}+\frac1{|Q|}\int_Q|f(x)|\,dx\r\},
\end{equation}
where $C_{(n)}$ is a positive constant depending only on the dimension $n$,
and if $\frac1{|Q|}\int_Q|f(x)|\,dx \to 0$
as $|Q|\to \fz$, then
\begin{equation}\label{estimate-2}
\|f\|_{wL^\fz(\rn)}\le c_{(n)}\|f\|_{{\rm BMO}(\rn)}.
\end{equation}
As shown in \cite{Ko07}, the above work also implies that the decreasing rearrangement $f^*$ of
a function $f$ in ${\rm BMO}(\rn)$ belongs to ${\rm BMO}(\rr_+)$ with \(\mathbb{R}_+:=(0,\infty)\) and
\begin{equation}\label{estimate-3}
\|f^\ast\|_{{\rm BMO}(\rr_+)}\le C_{(n)}\|f\|_{{\rm BMO}(\rn)}.
\end{equation}
The explicit bound on the constant arising from \cite{BDS-1} is $C_{(n)}\le 2^{n+5}$, which was further improved to be
$C_{(n)}\le 2(1+2\sqrt{n-1})$ by Burchard-Dafni-Gibara \cite{BDG22} .
The inequalities \eqref{estimate-1}, \eqref{estimate-2} and \eqref{estimate-3} allow us to reduce some extremal problems involving BMO functions on high-dimensional spaces to those for functions of a single variable. This in turn leads to important applications
in rearrangement-invariant spaces, which are significant from the viewpoint of harmonic analysis and mathematical physics.
Recently, Jiao et al. \cite{JOWZ-1} have successfully extended the work of Bennett-Devore-Sharply in \cite{BDS-1} into the
framework of noncommutative analysis.

Motivated by the increasing studying of harmonic analysis on fractals of Euclidean spaces, the Hausdorff content, low-dimensional outer measure on $\rn$, and the associated Choquet integral have received much attention. Some central topics, such as the boundedness of Hardy-Littlewood maximal operator, Muckenhoupt weights and weighted inequality associated the maximal operator, and BMO space, have been extensively studied in the setting of Hausdorff content and the Choquet integral; see, for example, \cite{0702-4,HZZ-2,0923-7,t11}. Particularly, for a given cube $Q$ in $\rn$ and $\delta\in (0,n]$, the \emph{space of functions with bounded $\delta$-dimensional mean oscillation} on $Q$ is defined by setting
\[{\rm {BMO}}^\delta(Q):=\left\{f\in L^1(Q, \ch_{\infty}^{\delta}): \|f\|_{{\rm {BMO}}^\delta(Q)}< \infty\right\},\]
where
\[\|f\|_{{\rm {BMO}}^\delta(Q)}:=\sup_{P\subset Q}\inf_{c\in \mathbb R}\frac{1}{\ch_{\infty}^{\delta}(P)}\int_{P}|f(x)-c|\,d\ch^{\delta}_{\infty}\]
with the supremum being taken over all cubes $P\subset Q$. Here $\ch^{\delta}_{\infty}$ denotes the Hausdorff content (see Section \ref{s2} for the definition). This space was characterized in \cite{HZZ-2} in terms of integral averages and Hausdorff content \(A_p\) weights, which coincides with the classical results at $\delta=n$.
Then the space \(\mathrm{BMO}^\delta(\mathbb{R}_+)\) is defined as a similar way by restricting all cubes to lie in \(\mathbb{R}_+\).

Based on \cite{BDS-1,JOWZ-1}, it is natural to ask a question that, in the setting of Hausdorff content, whether there is a meaningful
counterpart of the weak-$L^\fz$ space,
which in turn allows us not only to improve the interpolation theorem
to make it applicable to more operators,
but also to establish the analogues of the estimates \eqref{estimate-1}, \eqref{estimate-2} and \eqref{estimate-3}
for functions in ${\rm {BMO}}^\delta(Q)$.

The purposes of this paper is aimed to answer this question in the affirmative.
To this end, we begin with some notation and notions.
For a cube $Q$ in $\rn$, we use $wL^{\fz}(Q,\ch_\fz^\delta)$ to denote
the \emph{weak-$L^\fz$ space} corresponding to the Hausdorff content $\ch_\fz^\delta$
(For the specific definition, see Definition \ref{def-w}).
Following \cite{BDS-1}, under the setting of Hausdorff content, a sublinear operator $T$ is said to be of \emph{restricted weak type} $(1,1)$
if there exists a positive constant $C$ such that, for all characteristic functions $f=\textbf{1}_E$ of sets $E\subset\rn$
with $\ch_\fz^\delta(E)<\fz$,
\begin{align*}
	\sup_{t\in(0,\fz)}t\,\ch_\fz^\delta\lf(\lf\{x\in \rn:\ |Tf(x)|>t\r\}\r)\le C\|f\|_{L^1(\rn,\ch_\fz^\delta)};
\end{align*}
and $T$ is said to be of \emph{restricted weak type} $(\fz,\fz)$ if there exists
a positive constant $C$ such that, for all characteristic functions $f$,
$$\|Tf\|_{wL^\fz(\mathbb{R}^n, \mathcal{H}^{\delta}_{\infty})}
\le C\|f\|_{L^\fz(\mathbb{R}^n, \mathcal{H}^{\delta}_{\infty})}.$$

Then we state the following interpolation theorem.

\begin{thm}\label{tm0209-1}
Let $T$ be a sublinear operator of restricted weak type $(1, 1)$ and restricted weak type $(\infty,\infty)$.
Then it holds true that, for all simple functions $f$,
\begin{align}\label{eq0209-4}
		\|Tf\|_{L^p(\rn,\ch^\delta_\infty)}\leq C\|f\|_{L^p(\rn,\ch^\delta_\infty)},\qquad \forall\,p\in(1,\fz),
\end{align}
where $C$ is a positive constant independent of $f$.
In particular, if $T$ is linear, then $T$ has a unique extension to a bounded linear operator on
$L^p(\rn,\ch^\delta_\infty)$ for any $p\in(1,\fz)$.
\end{thm}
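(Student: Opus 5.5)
The plan follows Bennett--DeVore--Sharpley \cite[Theorem 2.2]{BDS-1}, working throughout with the decreasing rearrangement with respect to $\ch^\delta_\infty$:
$$f^\ast(t)=\inf\{\lz>0:\ \ch^\delta_\infty(\{|f|>\lz\})\le t\}.$$
We will use two basic facts about the Choquet integral (presumably recorded in Section~\ref{s2}). The first is the layer-cake identity
$$\int|f|^p\,d\ch^\delta_\infty=\int_0^\fz (f^\ast(t))^p\,dt.$$
The second is the quasi-subadditivity of $f\mapsto \|f\|_{L^p(\rn,\ch^\delta_\infty)}$ with a constant depending on $n,\delta,p$, which rests on the equivalence of $\ch^\delta_\infty$ with the dyadic content, itself strongly subadditive. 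Since $wL^\fz(\rn,\ch^\delta_\infty)$ is presumably defined through $f^{\ast\ast}(t)-f^\ast(t)$, where $f^{\ast\ast}(t)=\frac1t\int_0^t f^\ast$, both hypotheses can be restated in terms of $(Tf)^\ast$. Restricted weak type $(1,1)$ becomes
$$(T\textbf{1}_E)^\ast(t)\le C\,\ch^\delta_\infty(E)/t.$$
Restricted weak type $(\fz,\fz)$ becomes
$$(T\textbf{1}_E)^{\ast\ast}(t)-(T\textbf{1}_E)^\ast(t)\le C.$$

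\emph{Key pointwise estimate.} The first step is to combine these two facts for $g=T\textbf{1}_E$ with $a=\ch^\delta_\infty(E)$. We use the identity $\frac{d}{dt}g^{\ast\ast}(t)=-(g^{\ast\ast}(t)-g^\ast(t))/t$, which gives $g^{\ast\ast}(t)\le g^{\ast\ast}(s)+C\log(s/t)$ for $t<s$. Choosing $s=a$ and bounding $g^{\ast\ast}(a)$ via the weak $(1,1)$ bound (through $g^\ast\le C a/t$, then integrating via the weak-type integral, or more simply through the Lorentz $L^{1,\fz}$ norm controlled by $C$), we obtain
$$(T\textbf{1}_E)^{\ast}(t)\le C\min\{1+\log^+(a/t),\ a/t\}.$$
This is the Hausdorff-content analogue of the BDS estimate; it depends only on properties of $g^\ast$ as a function on $(0,\fz)$, so the step transfers verbatim. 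The right-hand side lies in $L^p(0,\fz)$ with norm $\le C_p a^{1/p}=C_p\|\textbf{1}_E\|_{L^p(\ch^\delta_\infty)}$. Hence $\|T\textbf{1}_E\|_{L^p(\ch^\delta_\infty)}\le C\|\textbf{1}_E\|_{L^p(\ch^\delta_\infty)}$ for all $p\in(1,\fz)$, and the same argument gives restricted weak type (indeed restricted strong type) $(p,p)$ for every $p$.

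\emph{From characteristic functions to simple functions.} This is the main obstacle, because the Choquet integral is not additive. Take $f$ simple and nonnegative (for general $f$, split into positive and negative parts and use sublinearity), and write it in layer-cake form
$$f=\sum_k c_k\textbf{1}_{E_k},\qquad E_1\supset E_2\supset\cdots\ \text{nested},\qquad c_k>0.$$
Sublinearity then gives $|Tf|\le\sum_k c_k|T\textbf{1}_{E_k}|$. To avoid losing the decay between levels, I would group the levels dyadically: set $F_j=\{f>2^j\}$, so that $f\le\sum_j 2^{j+1}\textbf{1}_{F_j}$ (this inequality requires $T$ to be monotone, or else I keep the exact decomposition with levels $c_k$ and group the $k$ with $c_k\sim 2^j$, whose sum of coefficients is $\le 2^{j+1}$). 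Then
$$|Tf|\le C\sum_j 2^j\,|T\textbf{1}_{F_j}|.$$
Pointwise summation in $L^p$ is done by the standard device, an interpolation of restricted types $p_0<p<p_1$, which is sufficient. By Stein--Weiss-type reasoning, restricted weak $(p_0,p_0)$ and $(p_1,p_1)$ give strong $(p,p)$ on simple functions. Concretely, I estimate $\ch^\delta_\infty(\{|Tf|>\lz\})$ by splitting the sum at $2^j\approx\lz$ and using quasi-subadditivity of $\ch^\delta_\infty$ (countable subadditivity holds for the content). The low levels are handled via the $p_1$ bound with weights $2^{-\epsilon(j_0-j)}$, and the high levels via the $p_0$ bound. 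Integrating in $\lz^{p-1}d\lz$ yields a bound by $\sum_j 2^{jp}\ch^\delta_\infty(F_j)$, which is $\approx\int f^p\,d\ch^\delta_\infty$ by the layer-cake formula for the Choquet integral.

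For linear $T$, the extension and its uniqueness follow from the density of simple functions in $L^p(\rn,\ch^\delta_\infty)$ and the completeness of $L^p(\rn,\ch^\delta_\infty)$, which is a quasi-Banach space.
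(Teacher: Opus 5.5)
\textbf{There is a genuine gap in the passage from characteristic functions to simple functions.} The paper identifies this step as the main difficulty. For a merely sublinear $T$, the pointwise bound $f\le\sum_j 2^{j+1}\mathbf{1}_{F_j}$ says nothing about $|Tf|$, as you note yourself. Your fallback does not produce $|Tf|\le C\sum_j 2^j|T\mathbf{1}_{F_j}|$ either. Group the nested decomposition $f=\sum_k c_k\mathbf{1}_{E_k}$ dyadically by the levels $a_k=c_1+\cdots+c_k$ (grouping by increments $c_k\sim 2^j$ does not even bound the block sums). Each block $J_j$ still contains a sum $\sum_{k\in J_j}c_k|T\mathbf{1}_{E_k}|$ of arbitrarily many \emph{different} functions.

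Countable subadditivity of $\ch^\dz_\fz$ only gives a union bound. When the $c_k$ and the contents $\ch^\dz_\fz(E_k)$ are comparable, the best it yields is
$\ch^\dz_\fz(\{\sum_{k\in J_j}c_k|T\mathbf{1}_{E_k}|>\mu\})\ls \# J_j\,(2^j/\mu)^q\,\ch^\dz_\fz(\{f\ge 2^j\})$,
and the factor $\# J_j$ is unbounded. Summing such blocks without loss needs a triangle inequality for weak-$L^q(\ch^\dz_\fz)$ whose constant is independent of the number of summands, i.e.\ normability. Classically this comes from linearity of the integral. Here it does not follow from subadditivity of the content: the Choquet integral for $\ch^\dz_\fz$ is only quasi-subadditive, and the constant $2$ compounds over many summands.

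The paper supplies exactly this missing ingredient. Through the $K$-functional identity (Proposition \ref{prop59-1}) it proves that $g\mapsto g^{**}$ is subadditive for the dyadic content (Corollary \ref{cor510-1}). It then writes $g=\sum_k b_k\mathbf{1}_{F_k}$ with nested $F_k$, so that $g^{**}=\sum_k b_k(\mathbf{1}_{F_k})^{**}$. The characteristic-function estimate gives $(Tg)^{**}\le\sum_k b_k(T\mathbf{1}_{F_k})^{**}\ls S(g^{**})$. The proof finishes with $L^p$-boundedness of $S$, via Lemma \ref{lm520-1} and Hardy's inequality. No interpolation between restricted types $(p_0,p_0)$ and $(p_1,p_1)$ is used.

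\textbf{Your distributional scheme can be rescued without normability} if the decomposition carries a single characteristic function per dyadic scale. Take $f\ge0$ simple with $N$ values and $M\in\nn$, and use the binary expansion $f_M:=2^{-M}\lfloor 2^M f\rfloor=\sum_{m\ge -M}2^m\mathbf{1}_{B_m}$. Then $B_m\subset\{f\ge 2^m\}$ and $|Tf_M|\le\sum_m 2^m|T\mathbf{1}_{B_m}|$. Your splitting at $2^m\approx\lz$ with geometric weights then gives $\|Tf_M\|^p_{L^p(\rn,\ch^\dz_\fz)}\ls\sum_m 2^{mp}\ch^\dz_\fz(\{f\ge 2^m\})\ls\|f\|^p_{L^p(\rn,\ch^\dz_\fz)}$. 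The remainder $f-f_M$ takes at most $N$ values, all less than $2^{-M}$. The crude $N$-term union bound together with restricted strong type then gives $\|T(f-f_M)\|^p_{L^p(\rn,\ch^\dz_\fz)}\ls N^{p+1}2^{-Mp}\ch^\dz_\fz(\{f>0\})\to0$ as $M\to\fz$. This would be a genuinely different and more elementary route than the paper's, but it is not what your proposal contains.

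\textbf{A smaller point in your first step.} You cannot bound $g^{**}(a)$ by integrating $g^*(s)\le Ca/s$ over $(0,a)$, since that integral diverges logarithmically. Use the $(\fz,\fz)$ hypothesis at $t=a$ instead: $g^{**}(a)\le g^*(a)+C\le 2C$.
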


The main difficulty for the proof of Theorem \ref{tm0209-1} is to prove the subadditivity of the second-order decreasing rearrangement $f_{\ch_\fz^\delta}^{\ast\ast,E}$ (see \eqref{eq0902} for the definition).
The original argument for the case of Lebesgue measure
strongly relies on the additivity of the Lebesgue measure, which is unavailable for the setting of the Hausdorff content.
Our method is to reduce the subadditivity of the second-order decreasing rearrangement to
a characterization of the decreasing rearrangement associated with the dyadic Hausdorff content (see Proposition \ref{prop59-1}).
Moreover, we also prove the density of the collection of simple functions in $L^p(\rn,\ch_\fz^\delta)$, which has
interest of its own.
This theorem is expected to apply to the Hilbert transform and, more generally, the Calde\'on-Zygmund singular integrals.
However, this remains challenging at present and we maight consideration in future work.

The second main result of this paper is the following theorem,
which extends the estimates \eqref{estimate-1}, \eqref{estimate-2} and \eqref{estimate-3} of \cite{BDS-1} to the setting of Hausdorff content.

\begin{thm}\label{tm1127-1}
Let $\dz\in (0,n]$. Then there exists a positive constant $C$ such that,
for any cube $Q\subset \rn$ and any $f\in {\rm {BMO}}^\delta(Q)$,
\begin{align*}
\|f\|_{wL^{\fz}(Q,\ch_\fz^\delta)}\leq C_{(\delta,n)}\lf(\|f\|_{{\rm {BMO}}^\delta(Q)}+\frac{1}{\ch_{\fz}^{\dz}(Q)}\int_{Q}|f(x)|\,d\ch_{\fz}^{\dz}\r),
\end{align*}
where $C_{(\delta,n)}$ is a positive constant only depending on $\delta$ and $n$.
Furthermore, if $f\in {\rm {BMO}}^\delta(\rn)$ and $\frac{1}{\ch_{\fz}^{\dz}(Q)}\int_{Q}|f(x)|\,d\ch_{\fz}^{\dz}\to 0$
as $\ch_\fz^\delta(Q)\to \fz$, then
$$\|f\|_{wL^{\fz}(\rn,\ch_\fz^\delta)}\le C_{(\delta,n)} \|f\|_{{\rm BMO}^\delta(\rn)}$$
and hence
$$\|f^\ast_{\ch_{\fz}^{\dz}}\|_{{\rm BMO}^\delta(\rr_+)}\le C_{(\delta,n)} \|f\|_{{\rm BMO}^\delta(\rn)}$$
\end{thm}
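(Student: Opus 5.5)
We follow the scheme of Bennett--DeVore--Sharpley \cite{BDS-1}. The analysis takes place at a fixed level $t\in(0,\ch_\fz^\delta(Q))$, with $\lambda:=f^\ast_{\ch_\fz^\delta}(t)$. The goal is
\[
\frac1t\int_0^t f^\ast_{\ch_\fz^\delta}(s)\,ds-f^\ast_{\ch_\fz^\delta}(t)\ls \|f\|_{{\rm BMO}^\delta(Q)}+\frac{1}{\ch_\fz^\delta(Q)}\int_Q|f|\,d\ch_\fz^\delta .
\]
Replacing $f$ by $|f|$ is harmless, since $\||f|\|_{{\rm BMO}^\delta}\le 2\|f\|_{{\rm BMO}^\delta}$ and $f^\ast=|f|^\ast$. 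We may also work throughout with the dyadic Hausdorff content $\wz\ch^\delta_\fz$, which is comparable to $\ch^\delta_\fz$ and is strongly subadditive. By the characterization in Proposition \ref{prop59-1}, the rearrangement quantities change only by constants under this switch.

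\textbf{Step 1: reduction.} Since $\frac1t\int_0^t f^\ast\,ds-f^\ast(t)=\frac1t\int_0^t (f^\ast(s)-\lambda)\,ds$, we want
\[
\int_0^t (f^\ast(s)-\lambda)_+\,ds\approx\int_{E_\lambda}(|f|-\lambda)\,d\ch_\fz^\delta ,\qquad E_\lambda:=\{x\in Q: |f(x)|>\lambda\}.
\]
Here $\ch_\fz^\delta(E_\lambda)\le t$, and the comparison follows from the layer-cake formula for the Choquet integral. It therefore suffices to show
\[
\int_{E_\lambda}(|f|-\lambda)\,d\ch_\fz^\delta\ls t\,\bigl(\|f\|_{{\rm BMO}^\delta}+\text{average of }|f|\text{ over }Q\bigr).
\]

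\textbf{Step 2: Calder\'on--Zygmund type cover.} Two cases arise. If $\lambda\le C\,\frac{1}{\ch_\fz^\delta(Q)}\int_Q|f|$, or if $t$ is comparable to $\ch_\fz^\delta(Q)$, we bound $\frac1t\int_0^tf^\ast$ directly: the Choquet integral of $f$ over a set of content at most $t$ is controlled by the BMO norm plus the average, by the John--Nirenberg inequality in the Hausdorff content setting (\cite{HZZ-2}). Otherwise, we cover the open-type set $\{|f|>\lambda\}$ by maximal dyadic subcubes $P_j\subset Q$ satisfying
\[
\frac{1}{\ch_\fz^\delta(P_j)}\int_{P_j}|f|\,d\ch_\fz^\delta>c\lambda .
\]
The key estimate we aim for is a packing condition,
\[
\sum_j\ell(P_j)^\delta\ls \ch_\fz^\delta(E_\lambda)\le t .
\]
To obtain it, we first select, via the dyadic content, a nonoverlapping family of dyadic cubes that almost realizes $\wz\ch_\fz^\delta(E_\lambda)$. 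We then use maximality, together with the fact that parents have average at most $c\lambda$, to compare the two families. On each $P_j$ we then write $|f|-\lambda\le |f-f_{P_j}|+(f_{P_j}-\lambda)_+$. The first term integrates to at most $\ell(P_j)^\delta\|f\|_{{\rm BMO}^\delta}$. The parent estimate gives $f_{P_j}\le 2^{\delta}c\lambda$, which must be paired with $\lambda=f^\ast(t)$ so that the excess $f_{P_j}-\lambda$ is itself bounded by the oscillation on the parent cube. Summing over $j$ and using countable subadditivity of the Choquet integral yields the bound $\ls t\|f\|_{{\rm BMO}^\delta}$.

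\textbf{Step 3: global statements.} For $f\in{\rm BMO}^\delta(\rn)$ we apply the cube estimate on cubes $Q$ exhausting $\rn$. The average term vanishes by hypothesis, and $f^\ast$ restricted to $Q$ increases to $f^\ast$ on $\rn$ (continuity from below of the content along increasing sets), which gives the weak-$L^\fz$ bound on $\rn$. For the ${\rm BMO}^\delta(\rr_+)$ bound on $f^\ast$, we follow \cite{BDS-1,Ko07}. On an interval $I=(a,b)\subset\rr_+$, the function $f^\ast$ is decreasing, so its oscillation over $I$ is controlled by $\frac1b\int_0^b f^\ast-f^\ast(b)$ together with monotonicity. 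For one-dimensional content on $\rr_+$ we have $\ch_\fz^\delta(I)\approx|I|^\delta$. We then compare averages over $(0,b)$ and $(a,b)$ using the weak-$L^\fz$ bound at the points $b$ and $a$.

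The main obstacle is the packing inequality in Step 2. Without additivity, neither the disjointness of the $P_j$ nor the content of their union directly controls $\sum_j\ell(P_j)^\delta$. We plan to handle it by working with the dyadic content and Proposition \ref{prop59-1}, using optimal dyadic covers of $E_\lambda$ and a stopping-time comparison between those covers and the maximal cubes $P_j$.
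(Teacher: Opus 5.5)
Your Step 2 fails at the estimate you call the key one. For the cubes you select (maximal dyadic $P_j\subset Q$ whose $|f|$-average exceeds $c\lambda$), the packing bound $\sum_j\ell(P_j)^\delta\ls\ch_\fz^\delta(E_\lambda)\le t$ is false. This already happens for $\delta=n=1$, where non-additivity plays no role. Take $Q=[0,1)$, $\epsilon=2^{-m}$, $0<t\ll\epsilon$ and $f=\mathbf{1}_{[0,\epsilon)}+\mathbf{1}_{[0,t)}$. Then $\lambda=f^\ast(t)=1$ and $E_\lambda=[0,t)$, while the average of $|f|$ over $Q$ is $\epsilon+t$, so you are in your ``otherwise'' case. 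But for every $c\le 1$ the interval $[0,\epsilon)$ has average $1+t/\epsilon>c\lambda$. Hence some selected $P_j$ has length at least $\epsilon\gg t$, and summing $\ell(P_j)^\delta\|f\|_{{\rm BMO}^\delta}$ gives at least $\epsilon\|f\|_{{\rm BMO}^\delta}$, not $t\|f\|_{{\rm BMO}^\delta}$. In general, the size of a Calder\'on--Zygmund family at level $c\lambda$ is governed by $\frac{1}{c\lambda}\int_Q|f|$, which has nothing to do with $t$.

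Taking $c>1$ does not help. The $P_j$ then need not cover the part of $E_\lambda$ where $\lambda<|f|\le c\lambda$. The only available bound for that part, $(c-1)\lambda\,\ch_\fz^\delta(E_\lambda)$, is not $\ls t(\|f\|_{{\rm BMO}^\delta}+\text{average})$, since $\lambda=f^\ast(t)$ is not controlled by $\|f\|_{{\rm BMO}^\delta}+\text{average}$. Also, when $2^\delta c>1$, your claim that the excess $f_{P_j}-\lambda$ is bounded by an oscillation has no mechanism behind it: nothing in the construction forces a substantial part of $P_j$ to lie where $|f|\le\lambda$.

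Your first case is also unjustified. You invoke the claim that the integral of $f$ over any set of content at most $t$ is $\ls t(\|f\|_{{\rm BMO}^\delta}+\text{average})$; this is false (take $\log(1/x)$ on $(0,1)$ and $E=(0,t)$). John--Nirenberg only gives it with an extra factor $1+\log(\ch_\fz^\delta(Q)/t)$, and never uses $\lambda\le C\cdot\text{average}$. Only the range $t\gs\ch_\fz^\delta(Q)$ is trivial, via $f^{\ast\ast}(t)\le\frac{K}{t}\int_Q|f|\,d\ch_\fz^\delta$.

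The missing idea is the Bennett--DeVore--Sharpley device that the paper adapts. One decomposes not $|f|$ but a relatively open set $O\subset Q$ containing $E=\{f>f^\ast(t)\}$, with $\ch_\fz^\delta(O)\le 3K^2t<\frac12\ch_\fz^\delta(Q)$ for $t<\ch_\fz^\delta(Q)/(6K^2)$. (The paper also puts into $O$ the set where ${\cm}^{\#,Q}_{\ch_\fz^\delta}f$ exceeds its rearrangement at $t$; this gives the sharper bound $f^{\ast\ast}(t)-f^\ast(t)\ls({\cm}^{\#,Q}_{\ch_\fz^\delta}f)^{\ast}(t)$.) The cubes are chosen by the density rule $\ch_\fz^\delta(O\cap Q_j)<\frac12\ch_\fz^\delta(Q_j)\le 2^\delta\ch_\fz^\delta(O\cap Q_j)$ of Lemma~\ref{lm1121-1}, and each inequality supplies one thing your construction lacks.

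The right-hand inequality ties the sizes of the cubes to $\ch_\fz^\delta(O)\ls t$. Because of non-additivity, this still needs a subfamily with the packing condition plus the covering cubes $Q_k^\ast$ (Lemma~\ref{lm0903}), followed by Proposition~\ref{prop121-1}. This is the obstacle you flagged; it is overcome precisely because each cube carries a fixed proportion of its content inside $O$.

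The left-hand inequality gives $\ch_\fz^\delta(O^\complement\cap Q_j)>\ch_\fz^\delta(O\cap Q_j)$. Since $f\le f^\ast(t)$ off $O$, a near-optimal constant $c_v>f^\ast(t)$ on $Q_{j_v}$ satisfies $[c_v-f^\ast(t)]\,\ch_\fz^\delta(E\cap Q_{j_v})\le\int_{O^\complement\cap Q_{j_v}}(c_v-f)\,d\ch_\fz^\delta$. This is how the excess over $\lambda$ is absorbed into the oscillation. Neither property can come from a stopping rule on averages of $|f|$.
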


Theorem \ref{tm1127-1} infers that, within the framework of Hausdorff content, the embedding relationship of  the space
${\rm{BMO}}^{\dz}(Q)$ into the weak-$L^\fz$ space $L^\infty(Q,\ch_{\fz}^{\dz})$ remains valid. This is a framework beyond measure theory.
The proof of Theorem \ref{tm1127-1} is different from that in the classical case,
even thought similar to \cite{BDS-1} we also need to prove a critical estimate \eqref{eq1123-1} below.
In our proof, we need to overcome the difficulty arising from the absence of linearity on Choquet integral and the additivity of Hausdorff content.
To this end, we split the integral to be estimated in \eqref{eq1126-10} into four parts based on
a covering lemma (see Lemma \ref{lm1121-1} below) and a packing condition (see Lemma \ref{lm0903} below).

The organization of this paper is as follows. In Section \ref{s2}, we provide some basic properties of the Hausdorff content
and decreasing rearrangement. In Section \ref{Sec3}, we prove Theorems \ref{tm0209-1}. In Section \ref{Sec4},
we give the proof of Theorem \ref{tm1127-1}.
As an application, in Section \ref{Sec5}, the boundedness of Hardy-Littlewood maximal on the weak-$L^\fz$ space is obtained.

Throughout the paper, the notation $f\ls g$ (resp. $f\gs g$) means $f\le Cg$ (resp. $f\ge Cg$)
for a positive constant $C$ independent of the main parameters, and
$f\sim g$ amounts to $f\ls g\ls f$. We also use the symbol $C_{(\alpha,\,\beta,\dots)}$ to denote a positive constant
which depends on the parameters $\alpha,\,\beta,\dots$, but may vary line to line.
The symbol ${\rm sgn} f$ represents the \emph{sign function} of $f$.
For any subset $E\subset\mathbb R^n$, denote by $\mathbf{1}_E$ the \emph{characteristic function} of set $E$ and,
for any $\delta\in(0,n]$, a function $f$ on $E$ is called a \emph{simple function} with respect to
the Hausdorff content $\mathcal{H}_{\infty}^{\delta}$ if there exist a positive integer $N$, pairwise disjoint subsets $E_1,E_2,\dots,E_N\subseteq E$ with $\mathcal{H}_{\infty}^{\delta}(E_k)<\infty$ for each $1\le k\le N$, and real constants $a_1,a_2,\dots,a_N$ such that
\[
f=\sum_{k=1}^{N}a_k\mathbf{1}_{E_k}.
\]
We denote by $\mathcal{S}(E,\mathcal{H}_{\infty}^{\delta})$ the collection of all such simple functions.

\section{Hausdorff Content and Decreasing Rearrangement}\label{s2}

In this section, we recall some notation related to the Hausdorff content and give some properties
of decreasing rearrangement.
Let $\dz \in (0,n]$. For any subset $E\subset \rn$,
the \emph{Hausdorff content} of $E$ is defined by setting
\begin{align*}
\mathcal{H}_{\infty }^{\delta }\lf (E \r )
:=\inf\lf \{\sum_i [l(Q_i)]^{\delta }:\
E\subset \bigcup_i Q_i \right \},
\end{align*}
where the infimum is taken over all finite or countable cubes coverings
$\{Q_i\}_{i}$ of $E$ and $l(Q)$ denotes the edge length
of the cube $Q$. The quantity $\mathcal{H}_{\infty }^{\delta }\lf (E \r )$ is also referred to as the $\delta$-\emph{Hausdorff content}
or the $\delta$-\emph{Hausdorff capacity} or the \emph{Hausdorff content of $E$ of
dimension} $\delta$.

For any subset $E\subset \rn$ and any non-negative function $f$, the \emph{Choquet integral} of $f$ on $E$ is defined by setting
\[\int_{E}f(x)\,d\ch_{\fz}^{\dz}\coloneqq\int_{0}^{\fz}\ch_{\fz}^{\dz}(\{x\in E:\ f(x)>t\})\,dt.\]
From the definition, it follows that the Choquet integral is well defined for all non-negative functions, without the assumption of measurability. For any $p\in(0,\fz)$ and any subset $E\subset \rn$, the \emph{$p$-Choquet
integral with respect to the capacity} of a function $f$ on $E$ is then defined by setting
\begin{align*}
\|f\|_{L^p(E,\ch_{\fz}^{\dz})}
\coloneqq\lf[\int_E |f(x)|^p\,d\ch_{\fz}^{\dz}\r]^\frac1p,
\end{align*}
and we use $L^p(E,\ch_{\fz}^{\dz})$ to denote
the space of all functions $f$ such that $\|f\|_{L^p(E,\ch_{\fz}^{\dz})}$ is finite.

Denote by $L_{\loc}^1(\rn,\ch_{\fz}^{\dz})$ the set of all functions $f$ satisfying that $f\in L^1(K,\ch_{\fz}^{\dz})$ for any compact set $K\subset\rn$.
We also use $L^{\fz}(E, \ch_\fz^\delta)$ to denote the space of all functions $f$ on $E$ such that
$$\|f\|_{L^\infty(E,\ch_\fz^\delta)}:=\inf\lf\{t>0:\ch_\fz^\delta(\{x\in E:\ |f(x)|>t\})=0\r\}<\fz.$$
It is worth noting that the Hausdorff content, originating from fractal geometry (see \cite{eg15,fe69}), plays a crucial role in harmonic analysis and nonlinear potential theory (see \cite{AdH96}). For the further theory of Choquet integral with respect to Hausdorff content, we refer the reader to the original work of G. Choquet \cite{ch55} and the excellent survey by D. R. Adams \cite{0923-2}. 

It is known that the Choquet integral with respect to $\ch_{\fz}^{\dz}$ satisfies the quasi-sublinear, namely,
$$\int_E[f(x)+g(x)]\,d\ch_{\fz}^{\delta}
\le 2\lf[\int_E f(x)\,d\ch_{\fz}^{\delta}+\int_E g(x)\,d\ch_{\fz}^{\delta}\r],$$
but it is not linear, even not sublinear.
Alternatively, we recall the \emph{dyadic Hausdorff content} $\widetilde{\ch}_\fz^{\delta}$,
which is defined by the same way as
$\ch^{\delta}_{\fz}$ but with cubes coverings $\{Q_i\}_i$ replaced by dyadic cubes coverings.
This dyadic Hausdorff content $\widetilde{\ch}^{\delta}_{\fz}$ is equivalent to $\ch_\fz^{\delta}$ (see \cite{Ad88}), namely,
there exists a positive constant $K=K(n, \delta)$ such that, for any subset $E\subset \rn$,
\begin{equation}\label{eq0723a}
\ch^{\delta}_{\fz}(E)\le \widetilde{\ch}_\fz^{\delta}(E)\le K\ch^{\delta}_{\fz}(E).
\end{equation}
Moreover, this dyadic Hausdorff content $\widetilde{\ch}^{\delta}_{\fz}$ satisfies the following properties:
If $\{E_j\}_{j\in\nn}$ is an  increasing sequence of arbitrary sets with limit $E$ (for short, $E_{j}\uparrow E$), then
\begin{equation}\label{eq0723b}
\widetilde{\ch}_\fz^\delta(E)=\lim_{j\to \fz}\widetilde{\ch}_\fz^\delta(E_j).
\end{equation}
This importantly property further leads to the sublinear of the Choquet integral with respect to the dyadic Hausdorff content
(see \cite[p.\,13, Theorem 1]{0923-2}):
for any non-negative functions $f$ and $g$ defined on $E$,
\begin{equation}\label{eq0904}
\int_{E}\lf[f(x)+g(x)\r]\,d\widetilde{\ch}_\fz^\delta
\le \int_{E}f(x)\,d\widetilde{\ch}_\fz^\delta+\int_{E}f(x)\,d\widetilde{\ch}_\fz^\delta.
\end{equation}
For the needs of the subsequent parts of this paper, we summarize some more relevant properties related to the Hausdorff content as follows.

\begin{rem}\label{12-4}
Let $\delta\in(0,n]$.
\begin{enumerate}
\item[\rm{(i)}]
Obviously, for any cube $Q$ of $\rn$, $\ch^{\delta}_{\fz}(Q)=[l(Q)]^{\delta}$ and hence
$\ch^{\delta}_{\fz}(tQ)=t^{\dz}\ch^{\delta}_{\fz}(Q)$ for any $t\in (0,\fz)$,
where $tQ$ denotes the cube that is concentric with $Q$ and has side length $tl(Q)$.

\item[\rm{(ii)}]
Let $E\subset \rn$ satisfy $\widetilde{\ch}_\fz^\delta(E)<\fz$. Then, for any function $f$ on $E$
and any positive constant $A$, we have
\[\int_{E}\lf[|f(x)|+A\r]\,d\widetilde{\ch}_\fz^\delta=\int_{E}|f(x)|\,d\widetilde{\ch}_\fz^\delta+A\widetilde{\ch}_\fz^\delta(E).\]
 Also, it is straightforward to verify that for $\ch_{\fz}^{\dz}$
\[\int_{E}[|f(x)|+A]\,d\ch_\fz^\delta=\int_{E}|f(x)|\,d\ch_\fz^\delta+A\ch_\fz^\delta(E).\]

\item[\rm{(iii)}]
By the subadditivity of $\ch_\fz^\delta$ and the definition of the Choquet integral,
we find that, for any sequence $\{E_j\}_{j\in\nn}$ of subsets in $\rn$,
$$\int_{\bigcup_{j\in\nn}E_j}|f(x)|\,d\ch_\fz^\delta\le \sum_{j\in\nn}\int_{E_j}|f(x)|\,\ch_\fz^\delta.$$
\end{enumerate}
\end{rem}

Let $\delta\in(0,n]$ and $E\subset\mathbb{R}^n$. For any function $f$ defined on $E$,
we define its \emph{decreasing rearrangement} with respect to the Hausdorff content $\mathcal{H}_{\infty}^{\delta}$, denoted by $f^{*,E}_{\mathcal{H}_{\infty}^{\delta}}$, as follows:
\begin{align*}
f^{*,E}_{\mathcal{H}_{\infty}^{\delta}}(t)
:=\inf\big\{\lambda>0 : \mathcal{H}_{\infty}^{\delta}\big(\{x\in E : |f(x)|>\lambda\}\big)\leq t\big\},
\quad \forall  t\in[0,\infty),
\end{align*}
which is also equal to $\inf\big\{\lambda\geq0 : \mathcal{H}_{\infty}^{\delta}\big(\{x\in E : |f(x)|>\lambda\}\big)\leq t\big\}$.
The \emph{second-order decreasing rearrangement} of $f$ with respect to $\mathcal{H}_{\infty}^{\delta}$ is denoted by $f^{**,E}_{\mathcal{H}_{\infty}^{\delta}}$ and defined by
\begin{equation}\label{eq0902}
f^{**,E}_{\mathcal{H}_{\infty}^{\delta}}(t)
:=\frac{1}{t}\int_0^t f^{*,E}_{\mathcal{H}_{\infty}^{\delta}}(s)\,ds,\qquad \forall\,t\in(0,\infty).
\end{equation}

Similarly, we define the decreasing rearrangement and the second-order decreasing rearrangement of $f$ with respect to $\widetilde{\mathcal{H}}_{\infty}^{\delta}$, and denote by $f^{*,E}_{\widetilde{\mathcal{H}}_{\infty}^{\delta}}$ and $f^{**,E}_{\widetilde{\mathcal{H}}_{\infty}^{\delta}}$, respectively.
In particular, if $E=\mathbb{R}^n$, then we omit the superscript $E$ for brevity and simply write
$f^{*}_{\mathcal{H}_{\infty}^{\delta}}, f^{**}_{\mathcal{H}_{\infty}^{\delta}},
f^{*}_{\widetilde{\mathcal{H}}_{\infty}^{\delta}}, f^{**}_{\widetilde{\mathcal{H}}_{\infty}^{\delta}}$instead of $f^{*,{\mathbb R}^n}_{\mathcal{H}_{\infty}^{\delta}}$, $f^{**,{\mathbb R}^n}_{\mathcal{H}_{\infty}^{\delta}}$, $f^{*,{\mathbb R}^n}_{\widetilde{\mathcal{H}}_{\infty}^{\delta}}$ and $f^{**,{\mathbb R}^n}_{\widetilde{\mathcal{H}}_{\infty}^{\delta}}$.

\begin{defn}\label{def-w}
Let $\delta\in(0, n]$ and $E$ be a subset of $\mathbb{R}^n$.
Then the \emph{weak-$L^\infty$ space with respect to the Hausdorff content} $\ch_\fz^\delta$ on $E$,
denoted by $wL^\infty(E, \mathcal{H}^{\delta}_{\infty})$,
is defined to be the set of all functions $f$ on $E$ such that
$$\|f\|_{wL^{\infty}(E, \mathcal{H}^{\delta}_{\infty})} := \sup_{t\in(0,\infty)}\left(f^{**, E}_{\mathcal{H}^{\delta}_{\infty}}(t)- f_{\mathcal{H}^{\delta}_{\infty}}^{*, E}(t)\right) < \infty.$$
\end{defn}

The following proposition presents several properties of the decreasing rearrangement in the context of the Hausdorff content.
Although their proofs are similar to that of the classical case, we give some details here for the completeness.

\begin{prop}\label{2.2}
Let $\dz\in (0,n]$, $E$ be a subset of $\rn$ and $f,\,g$ be functions defined on $E$.
\begin{enumerate}
\item[\rm{(i)}]
For any $t\in (0,\fz)$,
\[\ch_{\fz}^{\dz}\lf(\lf\{x\in E: |f(x)|>f^{*, E}_{\ch_{\fz}^{\dz}}(t)\r\}\r)\leq Kt,\]
here and thereafter, the constant $K$ is as in \eqref{eq0723a}.

\item[\rm{(ii)}]
For any $\alpha\in (0,\fz)$,
\[f^{*, E}_{\ch_{\fz}^{\dz}}\lf(\ch_{\fz}^{\dz}(\{x\in E: |f(x)|>\alpha\})\r)\leq\alpha.\]

\item[\rm{(iii)}]
For any $s, t\in [0,\fz)$, if $f^{*, E}_{\ch_{\fz}^{\dz}}(t)>s$, then $\ch_{\fz}^{\dz}(\{x\in E: |f(x)|>s\})>t$.

\item[\rm{(iv)}]
If $|g(x)|\leq|f(x)|$ for $\ch_{\fz}^{\dz}$-almost every $x\in E$,
then, for any $t\in [0, \fz)$, $g^{*, E}_{\ch_{\fz}^{\dz}}(t)\leq f^{*, E}_{\ch_{\fz}^{\dz}}(t)$.

\item[\rm{ (v)}]
 Let $\{f_{k}(x)\}_{k\in\nn}$ be a sequence of functions on $E$ satisfying that, for any $x\in E$ and any $k\in\nn$, $|f_k(x)|\leq |f_{k+1}(x)|$ and, for $\ch_{\fz}^{\dz}$-almost every $x\in E$, $\lim_{k\to\fz}|f_k(x)|=|f(x)|$.
  Then, we have
\[f^{*, E}_{\ch_{\fz}^{\dz}}(Kt)\leq \lim_{k\to\fz}(f_k)_{\ch_{\fz}^{\dz}}^{*, E}(t)\leq f^{*, E}_{\ch_{\fz}^{\dz}}(t),
\quad\forall\,t\in[0,\fz).\]

\item[\rm{(vi)}]
For any $\lambda\in [0,\fz)$, we have
\begin{align*}
\frac{1}{K}\ch_{\fz}^{\dz}(\{x\in E: |f(x)|>\lambda\})\leq\lf|\lf\{t\in [0,\infty):f^{*, E}_{\ch_{\fz}^{\dz}}(t)>\lambda\r\}\r|\leq K\ch_{\fz}^{\dz}(\{x\in E: |f(x)|>\lambda\}).
\end{align*}

\item[\rm{(vii)}]
For any $p\in(0, \fz)$,
\begin{align*}
\frac{1}{K}\int_E |f(x)|^p\,d\ch_{\fz}^{\dz}\leq \int_{0}^{\fz}\lf[f^{*, E}_{\ch_{\fz}^{\dz}}(t)\r]^p\,dt\leq K\int_E |f(x)|^p\,d\ch_{\fz}^{\dz}.
\end{align*}

\item[\rm{(viii)}]
For any $t_1, t_2\in [0, \fz)$, we have
$$(f+g)^{*, E}_{\ch_{\fz}^{\dz}}(t_1+t_2)\leq f^{*, E}_{\ch_{\fz}^{\dz}}(t_1)+g^{*, E}_{\ch_{\fz}^{\dz}}(t_2).$$

\item[\rm{(ix)}] For any $p\in(1,\infty)$,
\begin{align*}
	\int_{0}^{\fz}\lf[f^{*, E}_{\ch_{\fz}^{\dz}}(t)\r]^p\,dt\leq \int_{0}^{\fz}\lf[f^{**, E}_{\ch_{\fz}^{\dz}}(t)\r]^p\,dt\leq \lf(\frac{p}{p-1}\r)^p \int_{0}^{\fz}\lf[f^{*, E}_{\ch_{\fz}^{\dz}}(t)\r]^p\,dt.
\end{align*}
\end{enumerate}
\end{prop}

\begin{proof}
\rm(i)
For any $t\in[0,\fz)$, let $E^t:=\{s>0: \ch_{\fz}^{\dz}(\{x\in E: |f(x)|>s\})\leq t\}$, and  $\{s_k\}_{k\in\nn}\subset E^t$ be a decreasing sequence such that $\lim_{k\in\nn}s_k=f^{*, E}_{\ch_{\fz}^{\dz}}(t)$. If we define, for any $k\in\nn$,
\begin{align*}
E_{k}:=\lf\{x\in E:|f(x)|>s_{k}\r\},
\end{align*}
then $$\bigcup_{k\in\nn}E_k=\lf\{x\in E:|f(x)|>f^{*, E}_{\ch_{\fz}^{\dz}}(t)\r\}.$$
Therefore, by \eqref{eq0723a} and \eqref{eq0723b}, we find that
\begin{align}\label{eq1125-1}
\ch_{\fz}^{\dz}\lf(\lf\{x\in E: |f(x)|>f^{*, E}_{\ch_{\fz}^{\dz}}(t)\r\}\r)
&\leq \widetilde{\ch}_\fz^{\delta}\lf(\lf\{x\in E: |f(x)|>f^{*, E}_{\ch_{\fz}^{\dz}}(t)\r\}\r)=\lim_{k\to\fz}\widetilde{\ch}_\fz^{\delta}(E_k)\\
&\leq K\dlimsup_{k\to\fz}\ch_{\fz}^{\delta}(E_k)\le Kt\noz.
\end{align}

\rm{ (ii)}
For any given $\alpha\in (0, \fz)$, note that
$$\alpha\in \lf\{\lambda>0:\ch_{\fz}^{\dz}(\{x\in E: |f(x)|>\lambda\})\leq \ch_{\fz}^{\dz}(\{x\in E: |f(x)|>\alpha\})\r\}=:F.$$
Then we obtain
\begin{align*}
f^{*, E}_{\ch_{\fz}^{\dz}}\lf(\ch_{\fz}^{\dz}(\{x\in E: |f(x)|>\alpha\})\r)=\inf F\leq \alpha.
\end{align*}

\rm{(iii)}
From
\[f^{*, E}_{\ch_{\fz}^{\dz}}(t)=\inf\lf\{\lambda>0:\ch_{\fz}^{\dz}(\lf\{x\in E: |f(x)|>\lambda\r\})\leq t\r\}>s,\]
we deduce that $s\notin \{\lambda>0:\ch_{\fz}^{\dz}(\lf\{x\in E: |f(x)|>\lambda\r\})\leq t\}$.
Thus, $\ch_{\fz}^{\dz}(\{x\in E: |f(x)|>s\})>t.$

\rm{(iv)}
Since $|g(x)|\leq|f(x)|$ for $\ch_{\fz}^{\dz}$-almost every $x\in E$, it follows that, for any $\lambda\in(0,\fz)$,
\[\ch_{\fz}^{\dz}(\{x\in E: |g(x)|>\lambda\})\leq \ch_{\fz}^{\dz}(\{x\in E: |f(x)|>\lambda\}).\]
Then, for any $t\in[0,\fz)$,
\[\lf\{\lambda>0: \ch_{\fz}^{\dz}(\{x\in E: |f(x)|>\lambda\})\leq t\r\}
\subset \lf\{\lambda>0: \ch_{\fz}^{\dz}(\{x\in E: |g(x)|>\lambda\})\leq t\r\},\]
which implies that (iv) holds.

\rm(v)
Without loss of generality, we may assume that $\{f_{k}(x)\}_{k\in\nn}$ is a sequence of non-negative functions on $E$. By (iv), we find that, for any $k\in\nn$ and any $t\in[0,\fz)$,
\begin{align}\label{eq1125-2}
(f_{k})^{*, E}_{\ch_{\fz}^{\dz}}(t)\leq (f_{k+1})^{*, E}_{\ch_{\fz}^{\dz}}(t)\leq f^{*, E}_{\ch_{\fz}^{\dz}}(t).
\end{align}
Then, for a given $t\in[0,\fz)$, $\{(f_{k})^{*, E}_{\ch_{\fz}^{\dz}}(t)\}_{k\in\nn}$ converges and
$$h(t):=\lim_{k\to\fz}(f_{k})^{*, E}_{\ch_{\fz}^{\dz}}(t)\leq f^{*, E}_{\ch_{\fz}^{\dz}}(t).$$
 To complete the proof of (v), it remains to show that $f^{*, E}_{\ch_{\fz}^{\dz}}(Kt)\leq h(t)$. In fact, note that for any $k\in \nn$, $(f_{k})^{*, E}_{\ch_{\fz}^{\dz}}(t)\leq h(t)$, then by \eqref{eq0723a}, \eqref{eq0723b}, \eqref{eq1125-2} and \eqref{eq1125-1}, we have
\begin{align*}
\ch_{\fz}^{\dz}\lf(\lf\{x\in E: |f(x)|>h(t)\r\}\r)&\leq \widetilde{\ch}_\fz^{\delta}\lf(\lf\{x\in E: |f(x)|>h(t)\r\}\r)\\
&=\lim_{k\to\fz}\widetilde{\ch}_\fz^{\delta}\lf(\lf\{x\in E: |f_k(x)|>h(t)\r\}\r)\\
&\leq \limsup_{k\to\fz}\widetilde{\ch}_\fz^{\delta}\lf(\lf\{x\in E: |f_k(x)|>(f_{k})^{*, E}_{\ch_{\fz}^{\dz}}(t)\r\}\r)\\
&\leq Kt.
\end{align*}
Subsequently, by applying (iii), we immediately obtain $f^{*, E}_{\ch_{\fz}^{\dz}}(Kt)\leq h(t)$.

\rm{(vi)}
We first show that for any non-negative simple function $f$ on $E$ with respect to $\ch_\fz^\delta$,
\begin{align}\label{eq1125-3}
\ch_{\fz}^{\dz}\lf(\lf\{x\in E: |f(x)|>\lambda\r\}\r)=\lf|\lf\{t\in [0,\fz): f^{*, E}_{\ch_{\fz}^{\dz}}(t)>\lambda\r\}\r|,
\quad \forall\,\lambda\in [0,\fz).
\end{align}
Without loss of generality, we may assume $f:=\sum_{j=1}^{N}a_j\mathbf{1}_{E_j}$ with some $N\in\nn$,
where $\{E_j\}_{j=1}^N$ are pairwise disjoint subsets of $E$ and $a_{1}>a_{2}>\cdots>a_{N}>0$.
Additionally, we set $a_0=\fz$ and $a_{N+1}=0$. Then, when $\lambda\in [a_k, a_{k-1})$ with $k=2,\cdots, N+1$,
\begin{align}\label{eq1125-4}
\ch_{\fz}^{\dz}(\{x\in E: |f(x)|>\lambda\})=\ch_{\fz}^{\dz}\lf(\bigcup_{j=1}^{k-1}E_j\r),
\end{align}
and when $\lambda\in [a_1, a_{0})$,
\begin{align}\label{eq1125-5}
\ch_{\fz}^{\dz}(\{x\in E: |f(x)|>\lambda\})=\ch_{\fz}^{\dz}(\emptyset)=0.
\end{align}
For any $k=1,\cdots, N$, let $b_{k}:=\mathcal{H}_{\infty}^{\delta}\big(\bigcup_{j=1}^{k}E_j\big)$ and set $b_{0}=0$.
Then we easily conclude that for any $t\in [0,\infty)$,
\begin{align}\label{eq521-8}
f^{*, E}_{\mathcal{H}_{\infty}^{\delta}}(t)=\sum_{k=1}^{N}a_k\mathbf{1}_{[b_{k-1},b_k)}(t).
\end{align}
Here, we remark that if $b_{k-1}=b_{k}$ holds for some index $k$,
then the corresponding term $a_k\mathbf{1}_{[b_{k-1},b_k)}$ is omitted in the summation.
Subsequently, when $\lambda\in [a_k, a_{k-1})$ with $k=2,\cdots, N+1$,
\begin{align*}
\lf|\lf\{t\in [0,\fz): f^{*, E}_{\ch_{\fz}^{\dz}}(t)>\lambda\r\}\r|=|[0, b_{k-1})|=\ch_{\fz}^{\dz}\lf(\bigcup_{j=1}^{k-1}E_j\r),
\end{align*}
and when $\lambda\in [a_1, a_{0})$,
\[\lf|\lf\{t\in [0,\fz): f^{*, E}_{\ch_{\fz}^{\dz}}(t)>\lambda\r\}\r|=|\emptyset|=0.\]
Combining this, \eqref{eq1125-4} and \eqref{eq1125-5}, we immediately obtain \eqref{eq1125-3}.

Now, for any function $f$ defined on $E$, we choose an increasing sequence of non-negative simple functions
$\{f_k\}_{k\in\mathbb N}$ with respect to $\ch_\fz^\delta$ satisfying $f_k(x)\le f_{k+1}(x)$
and $ \lim_{k\to\infty}f_k(x)=|f(x)|$ for $\ch_\fz^\delta$-almost every $x\in E$.
Applying (v) of this proposition, we have
\begin{align}\label{eq1126-1}
f^{*, E}_{\ch_{\fz}^{\dz}}(Kt)\leq \lim_{k\to\fz}(f_k)_{\ch_{\fz}^{\dz}}^{*, E}(t)\leq f^{*, E}_{\ch_{\fz}^{\dz}}(t), \quad\forall t\in [0,\fz),
\end{align}
and, by (iv),
\begin{align*}
(f_k)_{\ch_{\fz}^{\dz}}^{*, E}(t)\leq (f_{k+1})_{\ch_{\fz}^{\dz}}^{*, E}(t)
\leq \lim_{k\to\fz}(f_k)_{\ch_{\fz}^{\dz}}^{*, E}(t) \quad\forall t\in [0,\fz),\ \forall k\in\nn.
\end{align*}
From this, together with \eqref{eq0723a}, \eqref{eq0723b} and \eqref{eq1125-3}, we infer that, for any $\lz\in(0,\fz)$,
\begin{align}\label{eq1126-3}
\ch_{\fz}^{\dz}(\{x\in E: |f(x)|>\lambda\})
&\leq \widetilde{\ch}_\fz^{\delta}(\{x\in E: |f(x)|>\lambda\})\\
&=\lim_{k\to\fz}\widetilde{\ch}_\fz^{\delta}(\{x\in E: f_k(x)>\lambda\})\noz\\
&\leq K\limsup_{k\to\fz}{\ch}_\fz^{\delta}(\{x\in E: f_k(x)>\lambda\})\noz\\
&=K\limsup_{k\to\fz}\lf|\lf\{t\in [0,\fz): (f_k)^{*, E}_{\ch_{\fz}^{\dz}}(t)>\lambda\r\}\r|\noz\\
&\leq K\lf|\lf\{t\in [0,\fz):\lim_{k\to\fz} (f_k)^{*, E}_{\ch_{\fz}^{\dz}}(t)>\lambda\r\}\r|\noz\\
&\leq K\lf|\lf\{t\in [0,\fz):f^{*, E}_{\ch_{\fz}^{\dz}}(t)>\lambda\r\}\r|\noz.
\end{align}
Conversely, by \eqref{eq1125-3} and \eqref{eq1126-1}, we find that, for any $\lz\in(0,\fz)$,
\begin{align*}
\lf|\lf\{t\in [0,\fz):f^{*, E}_{\ch_{\fz}^{\dz}}(t)>\lambda\r\}\r|
&=K\lf|\lf\{K^{-1}t\in [0,\fz):f^{*, E}_{\ch_{\fz}^{\dz}}(t)>\lambda\r\}\r|\\
&=K\lf|\lf\{t\in [0,\fz):f^{*, E}_{\ch_{\fz}^{\dz}}(Kt)>\lambda\r\}\r|\\
&\leq K\lf|\lf\{t\in [0,\fz):\lim_{k\to\fz}(f_k)_{\ch_{\fz}^{\dz}}^{*, E}(t)>\lambda\r\}\r|\\
&=K\lim_{k\to\fz}\lf|\lf\{t\in [0,\fz):(f_k)_{\ch_{\fz}^{\dz}}^{*, E}(t)>\lambda\r\}\r|\\
&=K\lim_{k\to\fz}{\ch}_\fz^{\delta}(\{x\in E: f_k(x)>\lambda\})\\
&\leq K{\ch}_\fz^{\delta}(\{x\in E: |f(x)|>\lambda\}).
\end{align*}
Therefore, (vi) holds true.

\rm{(vii)} is a consequence of (vi).

\rm{(viii)}
For any $t_1, t_1\in [0,\fz)$, we let
$$A:=\lf\{\lambda>0: \ch_{\fz}^{\dz}(\{x\in E: |f(x)|>\lambda\})\leq t_1\r\}\ {\rm{and}}\ B:=\lf\{\lambda>0: \ch_{\fz}^{\dz}(\{x\in E: |g(x)|>\lambda\})\leq t_2\r\}.$$
Then, for any $s_1\in A$ and any $s_2\in B$,
\[\ch_{\fz}^{\dz}(\{x\in E: |f(x)|>s_1\})\leq t_1\quad {\rm{and}} \quad\ch_{\fz}^{\dz}(\{x\in E: |g(x)|>s_2\})\leq t_2.\]
By this, we further obtain
\[\ch_{\fz}^{\dz}(\{x\in E: |f(x)+g(x)|>s_1+s_2\})\leq\ch_{\fz}^{\dz}(\{x\in E: |f(x)|+|g(x)|>s_1+s_2\})\leq t_1+t_2,\]
which implies that $(f+g)^{*, E}_{\ch_{\fz}^{\dz}}(t_1+t_2)\leq s_1+s_2$. Thus, (viii) is true.

\rm(ix) Since $f^{*,E}_{\mathcal{H}_{\infty}^{\delta}}(t)\leq f^{**,E}_{\mathcal{H}_{\infty}^{\delta}}(t)$
for all $t\in (0,\infty)$, the first inequality follows immediately.
For the second inequality, applying the Hardy inequality, we get
\begin{align*}
\int_{0}^{\infty}\lf[f^{**,E}_{\mathcal{H}_{\infty}^{\delta}}(t)\r]^pdt
=\int_{0}^{\infty}\left(\frac{1}{t}\int_0^t f^{*,E}_{\mathcal{H}_{\infty}^{\delta}}(s)ds\right)^p\,dt
\leq \left(\frac{p}{p-1}\right)^p \int_{0}^{\infty}\lf[f^{*,E}_{\mathcal{H}_{\infty}^{\delta}}(t)\r]^p\,dt.
\end{align*}
Hence assertion (ix) is valid.
\end{proof}

\begin{rem}\label{rem59-1}
It is not difficulty to see that
all properties in Proposition \ref{2.2} remain valid with
${H}_{\infty}^{\delta}$ replaced by $\widetilde{\mathcal{H}}_{\infty}^{\delta}$,
moreover, the corresponding constant can be taken as $K=1$.
\end{rem}

In the following proposition, using a $K$-functional, we represent the decreasing rearrangement.

\begin{prop}\label{prop59-1}
Let $\dz\in(0,n]$, $E$ be a subset of $\rn$ and $f$ a function defined on $E$. Then
\begin{align*}
\inf_{f=g+h} \lf\{
\|g\|_{L^1(E, \widetilde{\mathcal{H}}_{\infty}^{\delta})}
+ t\|h\|_{L^{\infty}(E, \widetilde{\mathcal{H}}_{\infty}^{\delta})}
\r\}=t f_{\widetilde{\mathcal{H}}_{\infty}^{\delta}}^{**, E}(t),\quad \forall\,t\in(0,\fz),
\end{align*}
where the infimum is taken over all decomposition of
$f=g+h\in L^1(E,\widetilde{\mathcal{H}}_{\infty}^{\delta}))+L^\fz(E,\widetilde{\mathcal{H}}_{\infty}^{\delta}))$.
\end{prop}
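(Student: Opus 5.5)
We prove the two inequalities separately, writing $f^\ast$ and $f^{\ast\ast}$ for $f^{\ast,E}_{\widetilde{\ch}_\fz^\delta}$ and $f^{\ast\ast,E}_{\widetilde{\ch}_\fz^\delta}$. By Remark \ref{rem59-1}, all items of Proposition \ref{2.2} hold for $\widetilde{\ch}_\fz^\delta$ with $K=1$. In particular, item (vii) with $p=1$ and $K=1$ gives $\int_E|f|\,d\widetilde{\ch}_\fz^\delta=\int_0^\fz f^\ast(s)\,ds$, and item (vi) says $f^\ast$ and $|f|$ are equimeasurable.

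\textbf{Upper bound ($\le t f^{\ast\ast}(t)$).} We may assume $f^\ast(t)<\fz$, since otherwise $f^{\ast\ast}(t)=\fz$ and there is nothing to prove. Set $\lz:=f^\ast(t)$ and use the truncation
$$h:=({\rm sgn} f)\min\{|f|,\lz\},\qquad g:=f-h=({\rm sgn} f)(|f|-\lz)_+ .$$
Then $\|h\|_{L^\fz(E,\widetilde{\ch}_\fz^\delta)}\le\lz$. For $g$, the level sets are $\{|g|>s\}=\{|f|>\lz+s\}$, so
$$\int_E|g|\,d\widetilde{\ch}_\fz^\delta=\int_\lz^\fz\widetilde{\ch}_\fz^\delta(\{|f|>u\})\,du .$$
By equimeasurability this equals $\int_\lz^\fz|\{f^\ast>u\}|\,du=\int_0^\fz (f^\ast(s)-\lz)_+\,ds$. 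Since $f^\ast$ is decreasing and $f^\ast(s)\le\lz$ for $s\ge t$, the last integral equals $\int_0^t (f^\ast(s)-\lz)\,ds$. Adding the two contributions,
$$\|g\|_{L^1}+t\|h\|_{L^\fz}\le\int_0^t f^\ast(s)\,ds-t\lz+t\lz=tf^{\ast\ast}(t).$$

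\textbf{Lower bound ($\ge t f^{\ast\ast}(t)$).} Let $f=g+h$ be any decomposition with $g\in L^1$ and $h\in L^\fz$. Item (viii) with $t_1=s$, $t_2=0$ gives
$$f^\ast(s)\le g^\ast(s)+h^\ast(0)\qquad\text{for all } s\in[0,\fz).$$
We need $h^\ast(0)\le\|h\|_{L^\fz}$. This follows from the definitions, since $\widetilde{\ch}_\fz^\delta(\{|h|>\mu\})=0$ for every $\mu>\|h\|_{L^\fz}$. Integrating the pointwise bound over $(0,t)$,
$$tf^{\ast\ast}(t)\le\int_0^t g^\ast(s)\,ds+t\|h\|_{L^\fz}\le\int_0^\fz g^\ast(s)\,ds+t\|h\|_{L^\fz}=\|g\|_{L^1}+t\|h\|_{L^\fz},$$
where the last equality is item (vii) with $K=1$. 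Taking the infimum over all decompositions gives the lower bound.

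\textbf{Main obstacle.} The delicate point is the exact identity $\int_E|f|\,d\widetilde{\ch}_\fz^\delta=\int_0^\fz f^\ast$ with constant $1$, together with the identity used for $\|g\|_{L^1}$. Both need the equimeasurability of item (vi) with $K=1$, and it must hold for \emph{every} level $\lz$, not merely almost every one. This rests on the monotone continuity \eqref{eq0723b} of the dyadic content, which is exactly why the proposition is stated for $\widetilde{\ch}_\fz^\delta$ and not for $\ch_\fz^\delta$. I would double-check that passage from simple functions in Remark \ref{rem59-1}; once it is secured, both directions are routine.
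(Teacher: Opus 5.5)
Your proof is correct and follows essentially the same route as the paper: the truncation $g=({\rm sgn} f)\,(|f|-f^{\ast}(t))_+$, $h=({\rm sgn} f)\min\{|f|,f^{\ast}(t)\}$ for one inequality, and items (vii)--(viii) of Proposition~\ref{2.2} with $K=1$ for the other. Your differences only streamline the argument: taking $t_2=0$ in (viii) avoids the paper's $\eta\to 1^-$ limit, and computing $\|g\|_{L^1(E,\widetilde{\mathcal{H}}_{\infty}^{\delta})}$ by the layer-cake formula with equimeasurability (vi) avoids the paper's case split on whether $\widetilde{\mathcal{H}}_{\infty}^{\delta}(F_t)=t$ or $\widetilde{\mathcal{H}}_{\infty}^{\delta}(F_t)<t$.
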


\begin{proof}
For any fixed $t\in(0,\infty)$, we first establish
\begin{align}\label{eq592}
t f_{\widetilde{\mathcal{H}}_{\infty}^{\delta}}^{**, E}(t)
\leq \inf_{f=g+h} \lf\{
\|g\|_{L^1(E, \widetilde{\mathcal{H}}_{\infty}^{\delta})}
+ t\|h\|_{L^{\infty}(E, \widetilde{\mathcal{H}}_{\infty}^{\delta})}
\r\}.
\end{align}
We may assume without loss of generality that
\[
\inf_{f=g+h} \lf\{
\|g\|_{L^1(E, \widetilde{\mathcal{H}}_{\infty}^{\delta})}
+ t\|h\|_{L^{\infty}(E, \widetilde{\mathcal{H}}_{\infty}^{\delta})}
\r\}<\infty.
\]
Then, for any $\varepsilon\in(0,\fz)$, there exist functions $g_1\in L^1(E, \widetilde{\mathcal{H}}_{\infty}^{\delta})$
and $h_1\in L^{\infty}(E, \widetilde{\mathcal{H}}_{\infty}^{\delta})$ on $E$ such that $f=g_1+h_1$ and
\begin{align}\label{eq593}
\|g_1\|_{L^1(E, \widetilde{\mathcal{H}}_{\infty}^{\delta})}
+ t\|h_1\|_{L^{\infty}(E, \widetilde{\mathcal{H}}_{\infty}^{\delta})}
\leq
\inf_{f=g+h} \lf\{
\|g\|_{L^1(E, \widetilde{\mathcal{H}}_{\infty}^{\delta})}
+ t\|h\|_{L^{\infty}(E, \widetilde{\mathcal{H}}_{\infty}^{\delta})}
\r\}+\varepsilon.
\end{align}
By (vii) and (viii) of Proposition \ref{2.2}, Remark \ref{rem59-1} and using the fact that
\[
(h_1)_{\widetilde{\mathcal{H}}_{\infty}^{\delta}}^{*, E}(0)=\|h_1\|_{L^{\infty}(E, \widetilde{\mathcal{H}}_{\infty}^{\delta})},
\]
we find that, for any $\eta\in(0,1)$,
\begin{align*}
tf_{\widetilde{\mathcal{H}}_{\infty}^{\delta}}^{**, E}(t)
&=\int_{0}^t f_{\widetilde{\mathcal{H}}_{\infty}^{\delta}}^{*, E}(s) \,ds
=\int_{0}^t \lf(g_1+h_1\r)_{\widetilde{\mathcal{H}}_{\infty}^{\delta}}^{*, E}(s) \,ds\\
&\leq \int_{0}^t (g_1)_{\widetilde{\mathcal{H}}_{\infty}^{\delta}}^{*, E} \lf(\eta s\r) \,ds
+ \int_{0}^t (h_1)_{\widetilde{\mathcal{H}}_{\infty}^{\delta}}^{*, E} \lf(s-\eta s\r) \,ds\\
& = \frac{1}{\eta}\int_{0}^{\eta t} (g_1)_{\widetilde{\mathcal{H}}_{\infty}^{\delta}}^{*, E}(u)\,du
+ \frac{1}{1-\eta}\int_{0}^{(1-\eta)t} (h_1)_{\widetilde{\mathcal{H}}_{\infty}^{\delta}}^{*, E}(u) \,du\\
& \leq \frac{1}{\eta} \int_{0}^{\infty} (g_1)_{\widetilde{\mathcal{H}}_{\infty}^{\delta}}^{*, E}(u) \,du
+\frac{1}{1-\eta}(h_1)_{\widetilde{\mathcal{H}}_{\infty}^{\delta}}^{*, E}(0) \int_{0}^{(1-\eta)t} 1 \,du\\
& =\frac{1}{\eta} \|g_1\|_{L^1(E, \widetilde{\mathcal{H}}_{\infty}^{\delta})}
+ t\|h_1\|_{L^{\infty}(E, \widetilde{\mathcal{H}}_{\infty}^{\delta})}.
\end{align*}
Letting $\eta\to 1^-$ and using \eqref{eq593} together with the arbitrariness of $\varepsilon$,
 \eqref{eq592} follows immediately.

We next prove the converse part, namely, for any $t\in(0,\fz)$,
\begin{align}\label{eq594}
\inf_{f=g+h} \lf\{
\|g\|_{L^1(E, \widetilde{\mathcal{H}}_{\infty}^{\delta})}
+ t\|h\|_{L^{\infty}(E, \widetilde{\mathcal{H}}_{\infty}^{\delta})}
\r\}
\leq t f_{\widetilde{\mathcal{H}}_{\infty}^{\delta}}^{**, E}(t)
=\int_0^t f_{\widetilde{\mathcal{H}}_{\infty}^{\delta}}^{*, E}(s)\,ds.
\end{align}
Without loss of generality, we may assume that $f_{\widetilde{\mathcal{H}}_{\infty}^{\delta}}^{**, E}(t)<\infty$.
Obviously,
$
f_{\widetilde{\mathcal{H}}_{\infty}^{\delta}}^{*, E}(t)\leq f_{\widetilde{\mathcal{H}}_{\infty}^{\delta}}^{**, E}(t).
$
Define
\[
F_t:=\lf\{x\in E: |f(x)|>f_{\widetilde{\mathcal{H}}_{\infty}^{\delta}}^{*, E}(t)\r\},\quad \forall\,t\in(0,\fz).
\]
Then $\widetilde{\mathcal{H}}_{\infty}^{\delta}(F_t)\le t$ by (i) of Proposition \ref{2.2}.
Given $t\in(0,\fz)$, let, for any $x\in E$,
\[
g_2(x) :=\lf[|f(x)|-f_{\widetilde{\mathcal{H}}_{\infty}^{\delta}}^{*, E}(t) \r]\mathbf{1}_{F_t}(x){\rm sgn} f(x),
\qquad
h_2(x) := \min\lf\{|f(x)|,f_{\widetilde{\mathcal{H}}_{\infty}^{\delta}}^{*, E}(t) \r\}{\rm sgn} f(x).
\]
Clearly, $f=g_2+h_2$. Since \(|h_2(x)|\leq f_{\widetilde{\mathcal{H}}_{\infty}^{\delta}}^{*, E}(t)\)
for all \(x\in E\), it follows from Remark \ref{12-4}(ii) that
\begin{align}\label{eq510-a}
\inf_{f=g+h} \lf\{
\|g\|_{L^1(E, \widetilde{\mathcal{H}}_{\infty}^{\delta})}
+ t\|h\|_{L^{\infty}(E, \widetilde{\mathcal{H}}_{\infty}^{\delta})}
\r\}
&\leq \lf\{
\|g_2\|_{L^1(E, \widetilde{\mathcal{H}}_{\infty}^{\delta})}
+ t\|h_2\|_{L^{\infty}(E, \widetilde{\mathcal{H}}_{\infty}^{\delta})}
\r\}\\
&\leq \int_F \lf(|f(x)|-f_{\widetilde{\mathcal{H}}_{\infty}^{\delta}}^{*,E}(t)\r)\,d \widetilde{\mathcal{H}}_{\infty}^{\delta}
+ t f_{\widetilde{\mathcal{H}}_{\infty}^{\delta}}^{*, E}(t)\notag\\
&=\int_E |(f\mathbf{1}_{F_t})(x)|\,d \widetilde{\mathcal{H}}_{\infty}^{\delta}
+ \lf(t-\widetilde{\mathcal{H}}_{\infty}^{\delta}(F_t)\r) f_{\widetilde{\mathcal{H}}_{\infty}^{\delta}}^{*, E}(t).\notag
\end{align}

When  \(s\ge \widetilde{\mathcal{H}}_{\infty}^{\delta}(F_t)\), it holds true that
\(\widetilde{\mathcal{H}}_{\infty}^{\delta}(\{x\in E: |(f\mathbf{1}_{F_t})(x)|>\lambda\})\le s\)
for any \(\lambda\in(0,\fz)\), and hence
\((f\mathbf{1}_{F_t})^{*,E}_{\widetilde{\mathcal{H}}_{\infty}^{\delta}}(s)=0.\)
This, combined with  (iv) and (vii) of Proposition \ref{2.2} and Remark \ref{rem59-1}, infers that
\begin{align}\label{eq510-1}
\int_E |(f\mathbf{1}_{F_t})(x)|\,d\widetilde{\mathcal{H}}_{\infty}^{\delta}
=\int_0^{\widetilde{\mathcal{H}}_{\infty}^{\delta}(F_t)} (f\mathbf{1}_{F_t})^{*,E}_{\widetilde{\mathcal{H}}_{\infty}^{\delta}}(s)\,ds
\leq\int_0^{\widetilde{\mathcal{H}}_{\infty}^{\delta}(F_t)} f^{*,E}_{\widetilde{\mathcal{H}}_{\infty}^{\delta}}(s)\,ds.
\end{align}
If $\widetilde{\mathcal{H}}_{\infty}^{\delta}(F_t)=t$, then \eqref{eq594} follows directly
from \eqref{eq510-a} and \eqref{eq510-1}.
If $\widetilde{\mathcal{H}}_{\infty}^{\delta}(F_t)<t$, then
for any $s\in(\widetilde{\mathcal{H}}_{\infty}^{\delta}(F_t),t)$,
\[
\widetilde{\mathcal{H}}_{\infty}^{\delta}\lf(\lf\{x\in E: |f(x)|>f^{*,E}_{\widetilde{\mathcal{H}}_{\infty}^{\delta}}(t)\r\}\r)
=\widetilde{\mathcal{H}}_{\infty}^{\delta}(F_t)<s,
\]
and hence $f^{*,E}_{\widetilde{\mathcal{H}}_{\infty}^{\delta}}(s)\leq f^{*,E}_{\widetilde{\mathcal{H}}_{\infty}^{\delta}}(t)$,
which, together with the decreasing property of $f^{*,E}_{\widetilde{\mathcal{H}}_{\infty}^{\delta}}$, implies that
\[
f^{*,E}_{\widetilde{\mathcal{H}}_{\infty}^{\delta}}(s)=f^{*,E}_{\widetilde{\mathcal{H}}_{\infty}^{\delta}}(t),
\quad \forall\, s\in\lf(\widetilde{\mathcal{H}}_{\infty}^{\delta}(F_t),t\r).
\]
Therefore, by \eqref{eq510-a} and \eqref{eq510-1}, we conclude that \eqref{eq594} holds.
The proof of Proposition \ref{prop59-1} is then completed.
\end{proof}

As a consequence of Proposition \ref{prop59-1}, we obtain the following sublinearity of the decreasing arrangement with
respect to the dyadic Hausdorff content $\widetilde{\mathcal{H}}_{\infty}^{\delta}$. This plays a crucial role in the proof of Theorem \ref{tm0209-1}.

\begin{cor}\label{cor510-1}
Let $\delta\in(0,n]$, $E\subset\mathbb{R}^n$, and $f_1,f_2$ be functions defined on $E$. Then, for any $t\in(0,\infty)$,
\begin{align}\label{eq510-b}
(f_1+f_2)_{\widetilde{\mathcal{H}}_{\infty}^{\delta}}^{**, E}(t)
\leq (f_1)_{\widetilde{\mathcal{H}}_{\infty}^{\delta}}^{**, E}(t)
+ (f_2)_{\widetilde{\mathcal{H}}_{\infty}^{\delta}}^{**, E}(t).
\end{align}
\end{cor}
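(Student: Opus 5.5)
The plan is to derive \eqref{eq510-b} directly from the $K$-functional representation in Proposition \ref{prop59-1}. Both sides of that identity are stated for the dyadic content $\widetilde{\mathcal{H}}_{\infty}^{\delta}$. For this content the Choquet integral is genuinely sublinear by \eqref{eq0904}, and the $L^\infty$ quasi-norm is subadditive. The only thing to arrange is that infimum over decompositions behaves well under sums.

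Fix $t\in(0,\infty)$. We may assume the right-hand side of \eqref{eq510-b} is finite, since otherwise there is nothing to prove. Let $\varepsilon>0$. By Proposition \ref{prop59-1} applied to $f_1$ and to $f_2$, choose decompositions $f_1=g_1+h_1$ and $f_2=g_2+h_2$ such that, for $i=1,2$,
\[
\|g_i\|_{L^1(E,\widetilde{\mathcal{H}}_{\infty}^{\delta})}+t\|h_i\|_{L^\infty(E,\widetilde{\mathcal{H}}_{\infty}^{\delta})}\le t(f_i)^{**,E}_{\widetilde{\mathcal{H}}_{\infty}^{\delta}}(t)+\varepsilon .
\]
Then $f_1+f_2=(g_1+g_2)+(h_1+h_2)$ is an admissible decomposition. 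Applying Proposition \ref{prop59-1} once more, now to $f_1+f_2$, gives
\[
t(f_1+f_2)^{**,E}_{\widetilde{\mathcal{H}}_{\infty}^{\delta}}(t)\le \|g_1+g_2\|_{L^1(E,\widetilde{\mathcal{H}}_{\infty}^{\delta})}+t\|h_1+h_2\|_{L^\infty(E,\widetilde{\mathcal{H}}_{\infty}^{\delta})}.
\]

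It remains to split the two norms. For the $L^1$ part, use $|g_1+g_2|\le|g_1|+|g_2|$ together with monotonicity of the Choquet integral. Then the sublinearity \eqref{eq0904} gives $\|g_1+g_2\|_{L^1}\le\|g_1\|_{L^1}+\|g_2\|_{L^1}$; note that \eqref{eq0904} contains a typo, and it is meant with $g$ in the second term. For the $L^\infty$ part, take any $s_i>\|h_i\|_{L^\infty}$, so that $\widetilde{\mathcal{H}}_{\infty}^{\delta}(\{|h_i|>s_i\})=0$. By subadditivity of the content, $\{|h_1+h_2|>s_1+s_2\}\subset\{|h_1|>s_1\}\cup\{|h_2|>s_2\}$ has zero content. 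Hence $\|h_1+h_2\|_{L^\infty}\le s_1+s_2$, and letting $s_i$ decrease to $\|h_i\|_{L^\infty}$ gives subadditivity of the $L^\infty$ norm.

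Combining these estimates yields $t(f_1+f_2)^{**,E}\le t(f_1)^{**,E}+t(f_2)^{**,E}+2\varepsilon$. Dividing by $t$ and letting $\varepsilon\to0$ gives \eqref{eq510-b}. The only genuinely delicate point is the $L^1$ subadditivity: it fails (up to the constant $2$) for $\mathcal{H}_{\infty}^{\delta}$ itself, which is exactly why the statement is made for the dyadic content and relies on \eqref{eq0904}. Everything else is routine bookkeeping with the infimum.
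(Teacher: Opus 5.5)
Your proposal is correct and is essentially the paper's own argument: take $\varepsilon$-optimal decompositions $f_i=g_i+h_i$ from Proposition~\ref{prop59-1}, apply that proposition to $f_1+f_2$ with the decomposition $(g_1+g_2)+(h_1+h_2)$, and use subadditivity of the dyadic $L^1$ and $L^\infty$ norms. The only difference is that you justify those two subadditivity steps explicitly and correctly flag the typo in \eqref{eq0904}; the paper leaves both implicit.
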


\begin{proof}
Let $t\in(0,\infty)$ be fixed.
Applying Proposition \ref{prop59-1} to $f_i$, $i\in\{1,2\}$, we know that, for any $\varepsilon\in(0,\fz)$,
there exist functions $g_i\in L^1(E, \widetilde{\mathcal{H}}_{\infty}^{\delta})$ and
$h_i\in L^{\infty}(E, \widetilde{\mathcal{H}}_{\infty}^{\delta})$ such that $f_i=g_i+h_i$ and
\[
\|g_i\|_{L^1(E, \widetilde{\mathcal{H}}_{\infty}^{\delta})}
+ t\|h_i\|_{L^{\infty}(E, \widetilde{\mathcal{H}}_{\infty}^{\delta})}
\leq t (f_i)_{\widetilde{\mathcal{H}}_{\infty}^{\delta}}^{**, E}(t)+\varepsilon.
\]
Then $f_1+f_2=(g_1+g_2)+(h_1+h_2)\in L^1(E, \widetilde{\mathcal{H}}_{\infty}^{\delta})
+L^{\infty}(E, \widetilde{\mathcal{H}}_{\infty}^{\delta})$. Thus, using Proposition \ref{prop59-1} again, we get
\[
\begin{aligned}
t (f_1+f_2)_{\widetilde{\mathcal{H}}_{\infty}^{\delta}}^{**, E}(t)
&\leq \|g_1+g_2\|_{L^1(E, \widetilde{\mathcal{H}}_{\infty}^{\delta})}
+ t\|h_1+h_2\|_{L^{\infty}(E, \widetilde{\mathcal{H}}_{\infty}^{\delta})}\\
&\leq \lf(
\|g_1\|_{L^1(E, \widetilde{\mathcal{H}}_{\infty}^{\delta})}
+ t\|h_1\|_{L^{\infty}(E, \widetilde{\mathcal{H}}_{\infty}^{\delta})}
\r)+\lf(
\|g_2\|_{L^1(E, \widetilde{\mathcal{H}}_{\infty}^{\delta})}
+ t\|h_2\|_{L^{\infty}(E, \widetilde{\mathcal{H}}_{\infty}^{\delta})}
\r)\\
&\leq t (f_1)_{\widetilde{\mathcal{H}}_{\infty}^{\delta}}^{**, E}(t)
+ t (f_2)_{\widetilde{\mathcal{H}}_{\infty}^{\delta}}^{**, E}(t)+2\varepsilon,
\end{aligned}
\]
which leads to \eqref{eq510-b} immediately.
This finishes the proof of Corollary \ref{cor510-1}.
\end{proof}

\section{Proof of Theorem \ref{tm0209-1}\label{Sec3}}

We begin with some properties of the space $L^p(\rn,\mathcal{H}_{\infty}^{\delta})$.
Recall that $\mathcal{S}(\rn,\mathcal{H}_{\infty}^{\delta})$ denotes the collection of all simple functions on $\rn$.

\begin{lem}\label{lm518-1}
Let $\delta\in(0, n]$ and $p\in(0, \infty)$. Then $\mathcal{S}(\rn,\mathcal{H}_{\infty}^{\delta})$ is dense in $L^p(\mathbb R^n,\mathcal{H}_{\infty}^{\delta})$.
\end{lem}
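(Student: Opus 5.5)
Given $f\in L^p(\rn,\ch_\fz^\delta)$, we may treat $f^+$ and $f^-$ separately: $|f|=f^++f^-$, and the Choquet integral is quasi-sublinear, so it suffices to approximate a nonnegative $f$. We build the standard dyadic-level truncations
\[
f_k:=\sum_{j=1}^{k2^k}\frac{j-1}{2^k}\mathbf 1_{\{x:\ (j-1)2^{-k}<f(x)\le j2^{-k}\}}\;+\;k\,\mathbf 1_{\{x:\ f(x)>k\}}\cdot 0 ,
\]
that is, we keep only the levels $f\le k$, or more conveniently $f_k:=2^{-k}\lfloor 2^k\min\{f,k\}\rfloor\mathbf 1_{\{f>2^{-k}\}}$. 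Then $0\le f_k\le f_{k+1}\le f$ and $f_k\to f$ pointwise. We must check that $f_k$ is simple in the paper's sense: the sets $E_j=\{(j-1)2^{-k}<f\le j2^{-k}\}$, $j\ge 2$, together with $\{f>k\}$, are pairwise disjoint and contained in $\{f>2^{-k}\}$. Since $f\in L^p$, Chebyshev for the Choquet integral gives $\ch_\fz^\delta(\{f>2^{-k}\})\le 2^{kp}\int f^p\,d\ch_\fz^\delta<\fz$, so each piece has finite content. No measurability is needed, because the Choquet integral and $\ch_\fz^\delta$ are defined for all sets.

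Next we estimate $\|f-f_k\|_{L^p}$. On $\{f\le k\}$ we have $0\le f-f_k\le 2^{-k}$, and on $\{f>k\}$ we have $0\le f-f_k\le f$. Thus
\[
0\le f-f_k\le \min\{f,2^{-k}\}+f\mathbf 1_{\{f>k\}},\qquad\text{hence}\qquad (f-f_k)^p\le 2^p\bigl(\min\{f,2^{-k}\}^p+f^p\mathbf 1_{\{f>k\}}\bigr).
\]
By quasi-sublinearity (or sublinearity after passing to $\widetilde{\ch}_\fz^\delta$ via \eqref{eq0723a} and \eqref{eq0904}), it suffices to show that both Choquet integrals tend to $0$. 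The layer-cake definition gives
\[
\int f^p\mathbf 1_{\{f>k\}}\,d\ch_\fz^\delta=\int_0^\fz \ch_\fz^\delta(\{f^p\mathbf 1_{\{f>k\}}>t\})\,dt .
\]
For each $t$ the integrand is bounded by $\ch_\fz^\delta(\{f^p>t\})$, which is integrable in $t$, and it tends to $0$ for each $t>0$: it is at most $\ch_\fz^\delta(\{f>k\})\le k^{-p}\|f\|_p^p$. Dominated convergence on $(0,\fz)$ with Lebesgue measure then yields the limit $0$.

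The main obstacle is the term $\int\min\{f,2^{-k}\}^p\,d\ch_\fz^\delta$, since $\ch_\fz^\delta(\{f>0\})$ may be infinite. The layer-cake formula handles it as well. This integral equals
\[
\int_0^{2^{-kp}}\ch_\fz^\delta(\{f^p>t\})\,dt ,
\]
which is the integral of an integrable function over a shrinking interval, and so tends to $0$ by absolute continuity of the Lebesgue integral. This finishes the case $f\ge0$, and combining with the reduction above proves the density in $L^p(\rn,\ch_\fz^\delta)$ for every $p\in(0,\fz)$. For $p<1$ we use instead the quasi-triangle inequality $\ch_\fz^\delta$-integral bound $\int(a+b)^p\le\int a^p+\int b^p$ pointwise together with the constant $2$ from quasi-sublinearity.
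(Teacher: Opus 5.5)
Your proof is correct, and it takes a genuinely different and more elementary route than the paper. The paper works in three stages. First it localizes to a ball: it imports from the Hausdorff-content $A_p$ theory (via the weight $|x|$) the packing inequality $\sum_{i\in\mathbb{Z}}\mathcal{H}^\delta_\infty(E\cap E_i)\le C\,\mathcal{H}^\delta_\infty(E)$ over the dyadic annuli $E_i=\{2^{i-1}<|x|\le 2^i\}$, and uses it to make $\int_{\mathbb{R}^n\setminus B(0,2^{N_1})}|f|^p\,d\mathcal{H}^\delta_\infty$ small. Second, it truncates large values with the layer-summation estimate $\sum_m\int_{\{2^{m-1}<|f|\le 2^m\}}|f|^p\,d\mathcal{H}^\delta_\infty\lesssim\int|f|^p\,d\mathcal{H}^\delta_\infty$. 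Third, it approximates the bounded truncation uniformly on the ball, which has finite content.

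You bypass localization altogether. Since $f_k$ vanishes on $\{f\le 2^{-k}\}$, all its nonzero level sets lie in $\{f>2^{-k}\}$, which has finite content by Chebyshev. So $f_k$ is simple in the paper's sense even when its support is unbounded. The cost of discarding small values is exactly $\int_0^{2^{-kp}}\mathcal{H}^\delta_\infty(\{f^p>t\})\,dt$, and the large-value tail is handled by dominated convergence in the layer-cake variable. Your argument uses only the definition of the Choquet integral, monotonicity and subadditivity of $\mathcal{H}^\delta_\infty$, and quasi-sublinearity. It is therefore self-contained and would work for any monotone subadditive set function.

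What the paper's route gives in addition is approximants supported in a fixed ball, i.e. density of boundedly supported simple functions; the lemma as stated does not require this. If you wanted it, you could get it cheaply. Every $A$ with $\mathcal{H}^\delta_\infty(A)<\infty$ satisfies $\mathcal{H}^\delta_\infty(A\setminus B(0,R))\to0$ as $R\to\infty$: take a cover with finite $\sum_i l(Q_i)^\delta$ and drop the cubes contained in $B(0,R)$. Your dominated-convergence argument then gives $\int_{\mathbb{R}^n\setminus B(0,R)}|f|^p\,d\mathcal{H}^\delta_\infty\to0$ with no packing lemma.

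Two cosmetic points. The first display defining $f_k$ (with the term multiplied by $0$) disagrees with your floor formula and should simply be dropped. The closing remark about $p<1$ is superfluous, since $(a+b)^p\le 2^p(a^p+b^p)$ already holds for every $p>0$.
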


\begin{proof}
Let $f\in L^p(\mathbb{R}^n,\mathcal{H}_{\infty}^{\delta})$. We only need to show that, for any $\varepsilon\in(0,\fz)$, there exists a function $g\in\mathcal{S}(\rn,\mathcal{H}_{\infty}^{\delta})$ satisfying
\begin{align}\label{eq518-1}
\|f-g\|_{L^p(\mathbb{R}^n,\mathcal{H}_{\infty}^{\delta})}<\varepsilon.
\end{align}

First, we prove that there exists $N_1\in\mathbb{N}$ such that
\begin{align}\label{eq518-2}
\int_{\mathbb{R}^n\setminus B(0,2^{N_1})}|f(x)|^p\,d\mathcal{H}_{\infty}^{\delta}<\frac{\varepsilon^p}{2}.
\end{align}
It is known from \cite[Proposition 2.17]{0702-4} that the function $w(x):=|x|$, $\forall\,x\in\rn$, belongs to
the class of Hausdorff content \(A_p\) weight. From this, Proposition 2.7 and Remark 2.9 (iv) of \cite{0702-4},
we deduce that there exists a positive constant $C$ such that, for any set $E\subset\mathbb{R}^n$,
\begin{equation*}
\sum_{i\in\mathbb{Z}}\mathcal{H}_{\infty}^{\delta}(E\cap E_i)\leq C\mathcal{H}_{\infty}^{\delta}(E),
\end{equation*}
where, for any $i\in\zz$, $E_{i}:=\{x\in\mathbb{R}^n:2^{i-1}<|x|\le 2^i\}$.
This further implies that
\begin{align*}
\sum_{i\in\mathbb{Z}}\int_{E_i}|f(x)|^p\,d\mathcal{H}_{\infty}^{\delta}
\leq K\int_{\mathbb{R}^n}|f(x)|^p\,d\mathcal{H}_{\infty}^{\delta}<\infty.
\end{align*}
and hence there exists a number $N_1\in\mathbb{N}$ such that
\[
\sum_{i=N_1+1}^{\infty}\int_{E_i}|f(x)|^p\,d\mathcal{H}_{\infty}^{\delta}<\frac{\varepsilon^p}{2}.
\]
This, combining with Remark \ref{12-4}(iii), infers \eqref{eq518-2}.

For any $k\in\nn$, we define
\[
g_{k}(x):=
\begin{cases}
f(x), & x\in B(0,2^{N_1})\text{ and }|f(x)|\leq 2^k,\\
2^k, & x\in B(0,2^{N_1})\text{ and }|f(x)|>2^k,\\
0, & x\in\mathbb{R}^n\setminus B(0,2^{N_1}).
\end{cases}
\]
Next, we show that there exists $N_2\in\mathbb{N}$ such that
\begin{align}\label{eq518-4}
\int_{B(0,2^{N_1})}\big|f(x)-g_{N_2}(x)\big|^p\,d\mathcal{H}_{\infty}^{\delta}<\frac{\varepsilon^p}{2^{p+3}}.
\end{align}
Using Remark \ref{12-4}(iii) again, we obtain
\begin{align}\label{eq518-5}
\int_{B(0,2^{N_1})}\big|f(x)-g_{k}(x)\big|^p\,d\mathcal{H}_{\infty}^{\delta}
&\leq 2^p\int_{\lf\{y\in B(0,2^{N_1}):|f(y)|>2^k\r\}}|f(y)|^p\,d\mathcal{H}_{\infty}^{\delta}\\
&\leq 2^p\sum_{m=k+1}^{\infty}\int_{\lf\{y\in B(0,2^{N_1}):2^{m-1}<|f(y)|\leq 2^m\r\}}|f(y)|^p\,d\mathcal{H}_{\infty}^{\delta}.\noz
\end{align}
By an argument similar to that used in the proof of \cite[Remark 2.8]{0702-4}, we find that
\[
\sum_{m\in\mathbb{Z}}^{\infty}\int_{\lf\{y\in B(0,2^{N_1}):2^{m-1}<|f(y)|\leq 2^m\r\}}|f(y)|^p\,d\mathcal{H}_{\infty}^{\delta}
\leq \frac{2^{2p}}{2^p-1}\int_{B(0,2^{N_1})}|f(y)|^p\,d\mathcal{H}_{\infty}^{\delta}<\infty.
\]
Then we can pick $N_2\in\mathbb{N}$ such that
\[
\sum_{m=N_2+1}^{\infty}\int_{\lf\{y\in B(0,2^{N_1}):2^{m-1}<|f(y)|\leq 2^m\r\}}|f(y)|^p\,d\mathcal{H}_{\infty}^{\delta}<\frac{\varepsilon^p}{2^{2p+3}},
\]
which, combined with \eqref{eq518-5}, yields that \eqref{eq518-4} is valid.

Let, for any $x\in\mathbb{R}^n$,
\[
g_{N_2}^+(x):=\max\big\{g_{N_2}(x),0\big\},\quad
g_{N_2}^-(x):=\max\big\{-g_{N_2}(x),0\big\}.
\]
Then it is clear that $g_{N_2}=g_{N_2}^+-g_{N_2}^-$.
We let $N_3\in\mathbb{N}$ be such that
\[
\frac{1}{2^{N_3}}<\frac{\varepsilon}{2^{2+\frac{3}{p}}\mathcal{H}_{\infty}^{\delta}\big(B(0, 2^{N_1})\big)^{\frac{1}{p}}},
\]
and define, for any $x\in\rn$,
\[
g^+(x):=\sum_{j=1}^{2^{N_2+N_3}}\frac{j}{2^{N_3}}
\mathbf{1}_{\{x\in\mathbb{R}^n:\frac{j-1}{2^{N_3}}<g_{N_2}^+(x)\le\frac{j}{2^{N_3}}\}}(x),
\]
\[
g^-(x):=\sum_{j=1}^{2^{N_2+N_3}}\frac{j}{2^{N_3}}
\mathbf{1}_{\{x\in\mathbb{R}^n:\frac{j-1}{2^{N_3}}<g_{N_2}^-(x)\le\frac{j}{2^{N_3}}\}}(x),
\]
and $g(x):=g^+(x)-g^-(x)$. Then $g\in \cs(\rn)$ and, for any $x\in\mathbb{R}^n$, we have
\[
\begin{aligned}
\big|g(x)-g_{N_2}(x)\big|\le \big|g^+(x)-g_{N_2}^+(x)\big|+\big|g^-(x)-g_{N_2}^-(x)\big|\le \frac{2}{2^{N_3}}
<\frac{\varepsilon}{2^{1+\frac{3}{p}}\mathcal{H}_{\infty}^{\delta}\big(B(0, 2^{N_1})\big)^{\frac{1}{p}}},
\end{aligned}
\]
which further derives that
\begin{align}\label{eq518-6}
\int_{B(0,2^{N_1})}\big|g(x)-g_{N_2}(x)\big|^p\,d\mathcal{H}_{\infty}^{\delta}<\frac{\varepsilon^p}{2^{p+3}}.
\end{align}
Using Remark \ref{12-4}(iii), combining \eqref{eq518-2}, \eqref{eq518-4} and \eqref{eq518-6}, we conclude that
\begin{align*}
\begin{aligned}
\int_{\mathbb{R}^n}|f(x)-g(x)|^p\,d\mathcal{H}_{\infty}^{\delta}
&\leq \int_{\mathbb{R}^n\setminus B(0,2^{N_1})}|f(x)-g(x)|^p\,d\mathcal{H}_{\infty}^{\delta}
+\int_{B(0,2^{N_1})}|f(x)-g(x)|^p\,d\mathcal{H}_{\infty}^{\delta}\\
&\leq \int_{\mathbb{R}^n\setminus B(0,2^{N_1})}|f(x)|^p\,d\mathcal{H}_{\infty}^{\delta}
+2^{p+1}\int_{B(0,2^{N_1})}\big|f(x)-g_{N_2}(x)\big|^p\,d\mathcal{H}_{\infty}^{\delta}\\
&\quad +2^{p+1}\int_{B(0,2^{N_1})}\big|g_{N_2}(x)-g(x)\big|^p\,d\mathcal{H}_{\infty}^{\delta}
<\varepsilon^p.
\end{aligned}
\end{align*}
This proves \eqref{eq518-1} and hence Lemma \ref{lm518-1}.
\end{proof}

Based on \eqref{eq0723a}, \eqref{eq0723b} and an argument similar to that used in the proof of  \cite[Proposition 2.3]{OP22}, we establish the following lemma, the details being omitted.

\begin{lem}\label{lm518-2}
Let $\delta\in(0,n]$ and $p\in(0,\infty)$. Then, for any sequence $\{f_k\}_{k\in\mathbb{N}}$ of
functions in the space $L^p(\mathbb{R}^n,\mathcal{H}_{\infty}^{\delta})$ satisfying
\[
\lim_{k,m\to\infty}\|f_k-f_m\|_{L^p(\mathbb{R}^n,\mathcal{H}_{\infty}^{\delta})}=0,
\]
there exists a function $f\in L^p(\mathbb{R}^n,\mathcal{H}_{\infty}^{\delta})$ such that
\[
\lim_{k\to\infty}\|f_k-f\|_{L^p(\mathbb{R}^n,\mathcal{H}_{\infty}^{\delta})}=0.
\]
\end{lem}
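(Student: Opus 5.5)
The plan is to follow the classical Riesz--Fischer argument, using \eqref{eq0723a} to pass to the dyadic content. For $\widetilde{\ch}_\fz^\delta$ the Choquet integral is genuinely sublinear by \eqref{eq0904}, and it satisfies monotone convergence by \eqref{eq0723b}. All estimates for $\ch_\fz^\delta$ then follow up to the constant $K$.

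\emph{Step 1: fast subsequence.} Since the sequence is Cauchy, choose indices $k_1<k_2<\cdots$ with $\|f_{k_{j+1}}-f_{k_j}\|_{L^p(\rn,\ch_\fz^\delta)}\le 4^{-j}$.

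\emph{Step 2: pointwise a.e.\ convergence.} By Chebyshev's inequality for Choquet integrals,
$$\ch_\fz^\delta\big(\{|f_{k_{j+1}}-f_{k_j}|>2^{-j}\}\big)\le 2^{jp}4^{-jp}=2^{-jp}.$$
Countable subadditivity then gives
$$\ch_\fz^\delta\Big(\bigcup_{j\ge m}\{|f_{k_{j+1}}-f_{k_j}|>2^{-j}\}\Big)\le \sum_{j\ge m}2^{-jp}\to0 .$$
Off the intersection of these unions, which is a $\ch_\fz^\delta$-null set, the sequence $\{f_{k_j}(x)\}$ is Cauchy. Define $f(x):=\lim_j f_{k_j}(x)$ there and $f:=0$ on the null set.

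\emph{Step 3: norm convergence of the subsequence.} For fixed $j$, I will show
$$\int_{\rn}|f-f_{k_j}|^p\,d\ch_\fz^\delta\to0 .$$
Set $G_{j,N}:=\sum_{i=j}^{N}|f_{k_{i+1}}-f_{k_i}|$. Then $|f-f_{k_j}|\le G_j:=\lim_N G_{j,N}$ a.e., and $G_{j,N}\uparrow G_j$.

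Case $p\ge1$: work with $\widetilde{\ch}_\fz^\delta$. Since $\int(\cdot)^p\,d\widetilde{\ch}_\fz^\delta$ is the Choquet integral of $|\cdot|^p$, and \eqref{eq0904} gives a Minkowski-type inequality for $p\ge1$ (the Choquet integral with respect to a submodular capacity is a convex functional, so $\|\cdot\|_{L^p(\widetilde{\ch}_\fz^\delta)}$ is a norm), we get
$$\|G_{j,N}\|_{L^p(\widetilde{\ch}_\fz^\delta)}\le \sum_{i\ge j}K^{1/p}4^{-i}.$$
Letting $N\to\fz$ by monotone convergence yields $\|G_j\|_{L^p(\widetilde{\ch}_\fz^\delta)}\ls 4^{-j}$.

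Case $p<1$: use the inequality $|a+b|^p\le|a|^p+|b|^p$ together with \eqref{eq0904} applied to $|\cdot|^p$. This gives $\int G_{j,N}^p\,d\widetilde{\ch}_\fz^\delta\le K\sum_{i\ge j}4^{-ip}$.

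The monotone convergence needed in both cases comes from \eqref{eq0723b}. The level sets $\{G_{j,N}>t\}$ increase to $\{G_j>t\}$, so $\widetilde{\ch}_\fz^\delta(\{G_{j,N}>t\})\uparrow\widetilde{\ch}_\fz^\delta(\{G_j>t\})$ for every $t$, and the classical monotone convergence theorem in $t$ finishes the limit. Returning to $\ch_\fz^\delta$ via \eqref{eq0723a}, we obtain $\|f-f_{k_j}\|_{L^p(\rn,\ch_\fz^\delta)}\to0$. In particular, $f\in L^p$ by quasi-subadditivity.

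\emph{Step 4: full sequence.} The quasi-triangle inequality
$$\|f_k-f\|\le C_{(p,K)}\big(\|f_k-f_{k_j}\|+\|f_{k_j}-f\|\big),$$
combined with the Cauchy property, gives convergence of the whole sequence.

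The main obstacle is Step 3 in the case $p\ge1$, namely the Minkowski inequality for the dyadic content. If convexity of the Choquet integral cannot be quoted directly, the fallback is to replace it by the quasi-triangle inequality with constant $2^{1/p}$. Applied to the tails, this produces a factor of order $2^{m/p}$ for $m$ terms, which is absorbed by instead choosing the subsequence so that $\|f_{k_{j+1}}-f_{k_j}\|\le 8^{-j/\min(p,1)}$. Concretely, write $G_j\le\sum_i|f_{k_{i+1}}-f_{k_i}|$ and estimate the level sets $\{G_j>t\}$ by splitting $t=\sum_i t2^{-(i-j+1)}$, followed by subadditivity of the content and Chebyshev's inequality on each piece.
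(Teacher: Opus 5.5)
Your proposal is correct and is essentially the argument the paper intends (the paper omits the proof and points to a Riesz--Fischer-type completeness argument built on \eqref{eq0723a} and \eqref{eq0723b}), and your main route through the convexity of the dyadic Choquet integral is sound. The only slip is in your fallback: applying Chebyshev's inequality to each piece gives only a weak-type bound for $G_j$, so instead integrate the level-set inclusion directly in $t$, which yields $\int G_j^p\,d\ch_\fz^\delta\le\sum_{i\ge j}2^{p(i-j+1)}\int|f_{k_{i+1}}-f_{k_i}|^p\,d\ch_\fz^\delta$ for every $p$ from countable subadditivity alone (alternatively, Fatou's lemma for $\widetilde{\ch}_\fz^\delta$, which follows from \eqref{eq0723b}, bypasses the tail estimates of Step 3 entirely).
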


As a special case of \cite[p.288. Lemma 3 ]{0304-1}, we have the following conclusion.

\begin{lem}\label{lm520-1}
Let $1\le p_1<p_2<\infty$.
Then, for any $p\in(p_1,p_2)$, there exists a positive constant $C$ depending only on $p_1,p_2$ and $p$
such that, for all non-negative non-increasing functions $f\in L^p(\mathbb{R}^+)$,
$$
\|\mathcal Rf\|_{L^p(\mathbb{R}^+)}\le C\|f\|_{L^p(\mathbb{R}^+)},
$$
where the operator $\mathcal R$ is defined by
$$
\mathcal R f(t):=\frac{1}{p_1 t^{\frac{1}{p_1}}}\int_0^t s^{\frac{1}{p_1}-1}f(s)\,ds
+\frac{1}{p_2 t^{\frac{1}{p_2}}}\int_t^\infty s^{\frac{1}{p_2}-1}f(s)\,ds, \quad \forall \ t\in(0,\fz).
$$
\end{lem}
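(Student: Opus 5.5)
The plan is to prove Lemma \ref{lm520-1} directly, by splitting $\mathcal R f=\mathcal R_1 f+\mathcal R_2 f$ into its two Hardy-type averaging pieces and bounding each on $L^p(\mathbb R^+)$. Positivity of $f$ is the only structure needed: the monotonicity hypothesis is not used, and the bound holds for all non-negative $f\in L^p(\mathbb R^+)$, so the lemma follows a fortiori.

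\emph{First term.} Write
\[
\mathcal R_1f(t)=\frac1{p_1}\int_0^1 u^{\frac1{p_1}-1}f(tu)\,du,
\]
using the substitution $s=tu$. Minkowski's integral inequality together with $\|f(u\,\cdot)\|_{L^p(\mathbb R^+)}=u^{-1/p}\|f\|_{L^p(\mathbb R^+)}$ gives
\[
\|\mathcal R_1f\|_{L^p(\mathbb R^+)}\le\frac1{p_1}\int_0^1u^{\frac1{p_1}-\frac1p-1}\,du\,\|f\|_{L^p(\mathbb R^+)}.
\]
The integral is finite exactly when $\frac1{p_1}>\frac1p$, i.e.\ $p>p_1$, and then the constant is $\frac{1/p_1}{1/p_1-1/p}$.

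\emph{Second term.} Similarly,
\[
\mathcal R_2f(t)=\frac1{p_2}\int_1^\infty u^{\frac1{p_2}-1}f(tu)\,du.
\]
Minkowski's inequality gives the bound $\frac1{p_2}\int_1^\infty u^{\frac1{p_2}-\frac1p-1}\,du$, which is finite exactly when $\frac1{p_2}<\frac1p$, i.e.\ $p<p_2$. The constant is $\frac{1/p_2}{1/p-1/p_2}$.

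Adding the two estimates yields the lemma with
\[
C=\frac{1/p_1}{1/p_1-1/p}+\frac{1/p_2}{1/p-1/p_2},
\]
which depends only on $p_1,p_2,p$. The only delicate point is justifying Minkowski's integral inequality for the operators written in dilation form; this is routine since all integrands are non-negative and measurable. No other step presents an obstacle.

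The authors presumably cite Calder\'on's lemma \cite{0304-1} for this, where $\mathcal R$ appears as the Calder\'on operator associated with the interpolation segment.
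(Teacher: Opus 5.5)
Your proof is correct. The substitution $s=tu$ turns both pieces of $\mathcal R f$ into averages of the dilates $f(tu)$. Minkowski's integral inequality together with $\|f(u\,\cdot)\|_{L^p(\mathbb R^+)}=u^{-1/p}\|f\|_{L^p(\mathbb R^+)}$ then gives the bound with $C=\frac{p}{p-p_1}+\frac{p}{p_2-p}$, which is what your two constants simplify to. The hypotheses $p>p_1$ and $p<p_2$ are exactly what make the two $u$-integrals converge. Since $p>p_1\ge 1$, Minkowski's inequality is available.

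Your route differs from the paper's. The paper gives no proof of its own: it quotes the statement as a special case of Lemma 3 (p.~288) of the reference it cites. That is a general result on this Calder\'on-type operator, formulated for non-increasing functions, which is presumably where the monotonicity hypothesis comes from. You instead prove the $L^p$ case directly via the two classical Hardy inequalities. This buys three things: the argument is self-contained, the constants are explicit, and it shows the non-increasing assumption is superfluous, since the bound holds for every non-negative $f\in L^p(\mathbb R^+)$. The citation buys brevity and a version valid beyond $L^p$.

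For the paper's purposes either version suffices. In Step iii) the lemma is only applied, with $(p_1,p_2)=(1,q)$, to $f^{**}_{\mathcal H^\delta_\infty}$, which is non-increasing anyway. In that case your first constant is the Hardy constant $p/(p-1)$ already used in Proposition~\ref{2.2}(ix).
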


Before proceeding to prove Theorem \ref{tm0209-1}, we recall the following operator $S$ (see \cite{BDS-1}),
which is very related to the operator $\mathcal R$. Let, for any function $f$ defined on $\mathbb R_+$,
\[
(Sf)(t):=\frac{1}{t}\int_0^t f(u)\,du + \int_t^\infty \frac{f(u)}{u}\,du,\quad \forall\,t\in(0,\infty).
\]

\begin{proof}[Proof of Theorem \ref{tm0209-1}]
We prove this theorem by four steps.

\textbf{Step i)} We show that, for any subset \(E\subset\mathbb{R}^n\) with \(0<\mathcal{H}_{\infty}^{\delta}(E)<\infty\),
\begin{align}\label{eq520-1}
(T\mathbf{1}_E)^{**}_{\mathcal{H}^\delta_\infty}(t)\leq CS\left((\mathbf{1}_E)^{**}_{\mathcal{H}^\delta_\infty}\right)(t),\quad \forall\ t\in(0,\fz).
\end{align}

For simplicity, we let \(s:=\mathcal{H}_{\infty}^{\delta}(E)\).
Then we have
\begin{align}\label{eq521-4}
(\mathbf{1}_E)^*_{\mathcal{H}_{\infty}^{\delta}}(t)=
\begin{cases}
1, & t\in\big(0,s\big),\\
0, & t\in\big[s,\infty\big).
\end{cases}
\end{align}
By this, the property of restricted weak type \((1,1)\) and (vii) of Proposition \ref{2.2}, we find that
\begin{align}\label{eq0209-5}
t(T\mathbf{1}_E)^*_{\mathcal{H}^\delta_\infty}(t)\ls\int^\infty_0(\mathbf{1}_E)^*_{\mathcal{H}^\delta_\infty}(t)\,dt\sim s, \quad \forall\ t\in(0,\fz).
\end{align}
In addition, since $T$ is of restricted weak type \((\infty,\infty)\) and $\|f\|_{L^\infty(\mathbb{R}^n,\mathcal{H}_{\infty}^{\delta})}=\sup_{t>0}f^{*}_{\mathcal{H}_{\infty}^{\delta}}(t)$,
it follows  that
\begin{align}\label{eq0209-6}
(T\mathbf{1}_E)^{**}_{\mathcal{H}^\delta_\infty}(t)-(T\mathbf{1}_E)^{*}_{\mathcal{H}^\delta_\infty}(t)
\ls \sup_{u\in(0,\infty)}(\mathbf{1}_E)^*_{\mathcal{H}^\delta_\infty}(u)
\sim 1,\quad \forall\ t\in(0,\fz).
\end{align}
From \eqref{eq0209-5} and \eqref{eq0209-6}, we deduce that
\begin{align}\label{eq0209-7}
(T\mathbf{1}_E)^*_{\mathcal{H}^\delta_\infty}(t)\ls
\begin{cases}
1+\log\frac{s}{t}, & t\in(0, s),\\
\ \frac{s}{t}, & t\in\big[s,\infty\big).
\end{cases}
\end{align}
Indeed, \eqref{eq0209-7} holds directly by virtue of \eqref{eq0209-5} when \(t\in[s,\infty)\).
When \(t\in(0,s)\), by \eqref{eq0209-6} and some basic calculations, we find that
\begin{align}\label{eq521-1}
(T\mathbf{1}_E)^{**}_{\mathcal{H}^\delta_\infty}(t)-(T\mathbf{1}_E)^{**}_{\mathcal{H}^\delta_\infty}(s)&=\int_t^s \lf[(T\mathbf{1}_E)^{**}_{\mathcal{H}^\delta_\infty}(u)-(T\mathbf{1}_E)^{*}_{\mathcal{H}^\delta_\infty}(u)\r]\,\frac{du}{u}\\
&\ls\int_{t}^{s}\frac{1}{u}\,du\sim\log\frac{s}{t}\noz.
\end{align}
Setting \(t=s\) in \eqref{eq0209-5} and combining with \eqref{eq0209-6}, we get
\begin{align*}
(T\mathbf{1}_E)^{**}_{\mathcal{H}^\delta_\infty}(s)
\ls (T\mathbf{1}_E)^{*}_{\mathcal{H}^\delta_\infty}(s)+1\ls 1,
\end{align*}
which, together with \eqref{eq521-1} and the fact that
$(T\mathbf{1}_E)^{*}_{\mathcal{H}^\delta_\infty}(t)\leq (T\mathbf{1}_E)^{**}_{\mathcal{H}^\delta_\infty}(t)$,
infers that \eqref{eq0209-7} also holds for \(t\in(0,s)\).
	
By the definition of the operator $S$ and \eqref{eq521-4}, we have
\begin{align*}
		S\lf((\mathbf{1}_E)^*_{\ch^\delta_\infty}\r)(t) =\frac{1}{t}\int_{0}^{t}(\mathbf{1}_E)^*_{\ch^\delta_\infty}(u)\,du+\int_{t}^{\infty}(\mathbf{1}_E)^*_{\ch^\delta_\infty}(u)\,\frac{du}{u}=\begin{cases}
1+\log\frac{s}{t}, & t\in(0, s),\\
\ \frac{s}{t}, & t\in\big[s,\infty\big).
\end{cases}
\end{align*}
Thus, \eqref{eq0209-7} can be written as
\begin{align}\label{eq521-3}
(T\mathbf{1}_E)^{*}_{\ch^\delta_\infty}(t)\ls S\lf((\mathbf{1}_E)^*_{\ch^\delta_\infty}\r)(t),
\end{align}
and
\begin{align}\label{eq521-5}
\frac{1}{t}\int_{0}^{t}S\left((\mathbf{1}_E)^*_{\mathcal{H}^\delta_\infty}\right)(u)\,du=
\begin{cases}
\displaystyle 2+\log\frac{s}{t}, & t\in(0,s),\\[6pt]
\displaystyle \frac{2s}{t}+\frac{s}{t}\log\frac{t}{s}, & t\in[s,\infty).
\end{cases}
\end{align}
Moreover, it follows again from \eqref{eq521-4} that
\begin{align}\label{eq0303-4}
S\lf((\mathbf{1}_E)^{**}_{\ch^\delta_\infty}\r)(t)= \begin{cases}
\displaystyle 2+\log\frac{s}{t}, & t\in(0,s),\\[6pt]
\displaystyle \frac{2s}{t}+\frac{s}{t}\log\frac{t}{s}, & t\in[s,\infty).
\end{cases}
\end{align}
Therefore, combining \eqref{eq521-3}, \eqref{eq521-5} and \eqref{eq0303-4}, we conclude that
\begin{align*}
(T\mathbf{1}_E)^{**}_{\mathcal{H}^\delta_\infty}(t)
\ls\frac{1}{t}\int_{0}^{t}S\left((\mathbf{1}_E)^*_{\mathcal{H}^\delta_\infty}\right)(u)\,du
\sim S\left((\mathbf{1}_E)^{**}_{\mathcal{H}^\delta_\infty}\right)(t),\quad \forall\ t\in(0,\fz),
\end{align*}
which implies that inequality \eqref{eq520-1} holds.

\textbf{Step ii)} We prove that, for any simple function $f\in \cs(\rn,\ch_\fz^\delta)$,
\begin{align}\label{eq521-6}
(Tf)^{**}_{\mathcal{H}^\delta_\infty}(t)\ls S\left(f^{**}_{\mathcal{H}^\delta_\infty}\right)(t)\quad \forall\ t\in(0,\infty).
\end{align}
To this end, it suffices to show that, for any non-negative simple function $g$,
\begin{align}\label{eq521-7}
(Tg)^{**}_{\mathcal{H}^\delta_\infty}(t)\ls S\left(g^{**}_{\mathcal{H}^\delta_\infty}\right)(t)\quad \forall\ t\in(0,\infty).
\end{align}
Indeed, for an arbitrary simple function $f$, we can decompose it as the difference of two non-negative simple functions $f_1$ and $f_2$, i.e., $f=f_1-f_2$. It is obvious that for all $t\in(0,\infty)$,
\[
(f_1)^{**}_{\mathcal{H}^\delta_\infty}(t)\leq f^{**}_{\mathcal{H}^\delta_\infty}(t),\quad (f_2)^{**}_{\mathcal{H}^\delta_\infty}(t)\leq f^{**}_{\mathcal{H}^\delta_\infty}(t).
\]
Combining the above estimates, \eqref{eq521-7}, Corollary \ref{cor510-1} and the sublinearity of operator $T$, we immediately conclude that \eqref{eq521-6} holds.

Next, we prove \eqref{eq521-7}. Let
\[
g:=\sum_{k=1}^{N}a_k\mathbf{1}_{E_k}
\]
be a non-negative finite simple function, where $N\in\mathbb{N}$, $a_1>a_2>\cdots>a_N>0$, and
$\{E_k\}_{k=1}^N$ are pairwise disjoint subsets of $\rn$ with $\mathcal{H}_{\infty}^{\delta}(E_k)<\infty$. For each $k=1,2,\dots,N$, set $F_k:=\bigcup_{j=1}^{k}E_j$ and $F_0:=\emptyset$. Define $b_k:=a_k-a_{k+1}$ for $k=1,2,\dots,N-1$ and $b_N:=a_N$. Then we can rewrite $g$ as
\begin{align}\label{eq521-9}
g=\sum_{k=1}^{N}b_k\mathbf{1}_{F_k}.
\end{align}
Moreover, in view of \eqref{eq521-8}, we have, for any $t\in(0,\infty)$,
\[
g^{*}_{\mathcal{H}_{\infty}^{\delta}}(t)
=\sum_{k=1}^{N}a_k\mathbf{1}_{\left[\mathcal{H}_{\infty}^{\delta}(F_{k-1}),\mathcal{H}_{\infty}^{\delta}(F_k)\right)}(t)
=\sum_{k=1}^{N}b_k\mathbf{1}_{\left[0,\mathcal{H}_{\infty}^{\delta}(F_k)\right)}(t)
=\sum_{k=1}^{N}b_k\left(\mathbf{1}_{F_k}\right)^{*}_{\mathcal{H}_{\infty}^{\delta}}(t),
\]
and then
\[
g^{**}_{\mathcal{H}_{\infty}^{\delta}}(t)=\sum_{k=1}^{N}b_k\left(\mathbf{1}_{F_k}\right)^{**}_{\mathcal{H}_{\infty}^{\delta}}(t),\quad \forall\,t\in(0,\infty).
\]
Thus, by Corollary \ref{cor510-1}, the sublinearity of operator $T$, the linearity of operator $S$, \eqref{eq520-1} and \eqref{eq521-9}, we deduce that, for any $t\in(0,\fz)$,
\begin{align*}
(Tg)^{**}_{\mathcal{H}^\delta_\infty}(t)
&\leq \sum_{k=1}^{N}b_{k}\left(T\mathbf{1}_{F_{k}}\right)^{**}_{\mathcal{H}^\delta_\infty}(t)
\ls\sum_{k=1}^{N}b_{k}S\left(\left(\mathbf{1}_{F_{k}}\right)^{**}_{\mathcal{H}^\delta_\infty}\right)(t)\\
&\sim \left(\sum_{k=1}^{N}b_{k}\left(\mathbf{1}_{F_{k}}\right)^{**}_{\mathcal{H}^\delta_\infty}\right)(t)
\sim S\left(g^{**}_{\mathcal{H}^{\delta}_{\infty}}\right)(t).
\end{align*}
This implies that \eqref{eq521-7} is valid for any non-negative simple function $g$ and
hence \eqref{eq521-6} also holds true.

\textbf{Step iii)} We proceed to prove \eqref{eq0209-4}.
For any $p\in(1,\infty)$, we fix some constant $q\in(p,\infty)$.
Let the operator $\mathcal R$ be as in Lemma \ref{lm520-1} with $(p_1,p_2)=(1,q)$.
Then by Lemma \ref{lm520-1} and Remark \ref{2.2}(ix), we get
\begin{align}\label{eq0521-10} \int^{\infty}_0\lf[\mathcal{R}\left(f^{**}_{\mathcal{H}_{\infty}^{\delta}}\right)(t)\r]^p\,dt\ls\int^{\infty}_0\lf(f_{\mathcal{H}^{\delta}_{\infty}}^{**}(t)\r)^p\,dt\ls\int_\rn|f(x)|^p\, d\ch^\delta_\infty.
\end{align}
Notice that, for any $t\in(0,\infty)$,
\[
S\left(f^{**}_{\mathcal{H}^{\delta}_{\infty}}\right)(t)
\le \frac{q}{t}\int_0^t f^{**}_{\mathcal{H}_{\infty}^{\delta}}(s)\,ds
+\int_t^\infty \frac{1}{s}f^{**}_{\mathcal{H}_{\infty}^{\delta}}(s)\,ds
\le q
\mathcal{R}\left(f^{**}_{\mathcal{H}_{\infty}^{\delta}}\right)(t).\]
Then, by (vii) and (ix) of Proposition \ref{2.2}, \eqref{eq521-6} and \eqref{eq0521-10}, we conclude that
\begin{align*}
\int_\rn |Tf(x)|^p\, d\ch^\delta_\infty&\ls\int_0^{\fz} \lf[(Tf)^{*}_{\ch^\delta_\infty}(t)\r]^p\,dt\ls \int^{\infty}_0\lf[(Tf)^{**}_{\ch^\delta_\infty}(t)\r]^p\,dt\\
&\ls \int^{\infty}_0\lf[S\lf(f_{\mathcal{H}^{\delta}_{\infty}}^{**}\r)(t)\r]^p\,dt\ls \int^{\infty}_0\lf[\mathcal{R}\left(f^{**}_{\mathcal{H}_{\infty}^{\delta}}\right)(t)\r]^p\,dt\ls\int_\rn|f(x)|^p\, d\ch^\delta_\infty.
\end{align*}
This implies that \eqref{eq0209-4} holds for all $\cs(\rn,\ch_\fz^\delta)$.
	
\textbf{Step iv)} We show that if $T$ is linear, then $T$ admits a unique extension to a bounded linear operator on $L^p(\mathbb{R}^n,\mathcal{H}^\delta_\infty)$ for any $p\in(1,\infty)$.
For any $f\in L^p(\mathbb{R}^n,\mathcal{H}^\delta_\infty)$, by Lemma \ref{lm518-1}, there exists a sequence $\{f_k\}_{k}\subset \mathcal{S}(\rn,\mathcal{H}_{\infty}^{\delta})$ such that
\begin{align}\label{eq521-10}
\lim_{k\to\infty}\|f-f_k\|_{L^p(\mathbb{R}^n,\mathcal{H}_{\infty}^{\delta})}=0.
\end{align}
Then it is obvious that
\[
\lim_{\substack{k\to\infty\\m\to\infty}}\|f_k-f_m\|_{L^p(\mathbb{R}^n,\mathcal{H}_{\infty}^{\delta})}=0.
\]
Subsequently, by virtue of the linearity of operator $T$ and \eqref{eq0209-4}, we deduce that
\begin{align*}
\lim_{\substack{k\to\infty\\m\to\infty}}\left\|Tf_{k}-Tf_{m}\right\|_{L^{p}(\mathbb{R}^{n},\mathcal{H}^{\delta}_{\infty})}
=\lim_{\substack{k\to\infty\\m\to\infty}}\left\|T(f_{k}-f_{m})\right\|_{L^{p}(\mathbb{R}^{n},\mathcal{H}^{\delta}_{\infty})}
\ls\lim_{\substack{k\to\infty\\m\to\infty}}\left\|f_{k}-f_{m}\right\|_{L^{p}(\mathbb{R}^{n},\mathcal{H}^{\delta}_{\infty})}=0.
\end{align*}
Thus, applying Lemma \ref{lm518-2}, there exists a function denoted by $Tf$ with $Tf\in L^p(\mathbb{R}^n,\mathcal{H}_{\infty}^{\delta})$ such that
\begin{align}\label{eq521-11}
\lim_{k\to\infty}\|Tf_k-Tf\|_{L^p(\mathbb{R}^n,\mathcal{H}_{\infty}^{\delta})}=0.
\end{align}
It is easy to verify that the operator defined above is well-defined.
Furthermore, we have, for any $k\in\mathbb{N}$,
\begin{align*}
\|Tf\|_{L^p(\mathbb{R}^{n},\mathcal{H}^{\delta}_{\infty})}
&\leq  \|Tf_k-Tf\|_{L^p(\mathbb{R}^{n},\mathcal{H}^{\delta}_{\infty})}+\|Tf_{k}\|_{L^p(\mathbb{R}^{n},\mathcal{H}^{\delta}_{\infty})}\\
&\leq  \|Tf_k-Tf\|_{L^p(\mathbb{R}^{n},\mathcal{H}^{\delta}_{\infty})}+\|f_{k}\|_{L^p(\mathbb{R}^{n},\mathcal{H}^{\delta}_{\infty})}\\
&\lesssim\|Tf_k-Tf\|_{L^p(\mathbb{R}^{n},\mathcal{H}^{\delta}_{\infty})}
+\|f_{k}-f\|_{L^p(\mathbb{R}^{n},\mathcal{H}^{\delta}_{\infty})}+
\|f\|_{L^p(\mathbb{R}^{n},\mathcal{H}^{\delta}_{\infty})}.
\end{align*}
Taking $k\to\infty$ and combining with \eqref{eq521-10} and \eqref{eq521-11}, we immediately conclude that $T$ is a bounded linear operator on $L^p(\mathbb{R}^n,\mathcal{H}_{\infty}^{\delta})$.
In addition, the uniqueness of the extended operator $T$ is obvious. This completes the proof of Theorem \ref{tm0209-1}.
\end{proof}

\section{Proof of Theorem \ref{tm1127-1}\label{Sec4}}

Let $\delta\in(0,n]$ and $Q$ be a given cube of $\rn$. We recall the  \emph{sharp Hausdorff content maximal operator} on $Q$ is defined
by setting, for any $f\in L^1(Q, \ch_{\fz}^{\dz})$ and any $x\in\rn$,
\[{\cm}^{\#, Q}_{\ch_{\fz}^{\dz}}f(x):=\sup_{P\subset Q, P\ni x}\inf_{c\in\mathbb R}\frac{1}{\ch_{\fz}^{\dz}(P)}\int_{P}|f(y)-c|\,d\ch_{\fz}^{\dz}.\]

\begin{rem}\label{rem-1.1}
Let $\dz\in (0, n]$ and $Q$ be a given cube of $\rn$.
Then it is obviously to know that, $f\in {\rm {BMO}}^\delta(Q)$,
\[\|f\|_{{\rm {BMO}}^\delta(Q)}=\sup_{x\in Q}{\cm}^{\#, Q}_{\ch_{\fz}^{\dz}}f(x).\]
\end{rem}

For a given left-open and right-closed cube $Q$ in $\mathbb R^n$,
let $\mathcal D(Q)$ be the set of all dyadic cubes generated by $Q$.
Denote by $\mathcal D$ the collection of all classical dyadic cubes in $\mathbb R^n$.
It is easy to see that if $Q$ is just a dyadic cube in $\mathbb R^n$, then $\mathcal D(Q)=\mathcal D$.
Furthermore, we denote by $\mathcal D_0(Q)$ the family of all dyadic subcubes of $Q$, namely
$$\mathcal D_0(Q):=\{P\in \mathcal D(Q): P\subset Q\}.$$

For a set $E\subset\mathbb R^n$ and $\delta \in (0,n]$, its \emph{inner dyadic Hausdorff content} $\widetilde{\mathcal H}_{\circ}^{\delta}(E)$ (see, for example, \cite{Ada15}) is defined as
\begin{align*}
\widetilde{\mathcal H}_\circ^{\delta}(E)
:=\inf\left\{\sum_i \big[l(Q_i)\big]^{\delta}:\
E\subset \left(\bigcup_i Q_i\right)^\circ,\ Q_i\in \mathcal D,\ \forall\, i\right\},
\end{align*}
where for any set $G\subset \rn$, $G^\circ$ stands for the interior of $G$,
and $l(Q_i)$ represents the side length of cube $Q_i$. Moreover, for any $E\subset\mathbb R^n$,
\begin{equation}\label{eq0723c}
\mathcal H^{\delta}_{\fz}(E)\le \widetilde{\mathcal H}_{\circ}^{\delta}(E)\le K\mathcal H^{\delta}_{\fz}(E).
\end{equation}
Here the constant $K$ is as in \eqref{eq0723a}.

The following lemma is a partly extension of \cite[Lemma 3.2]{BDS-1} or \cite[Lemma 1.1]{BS-1}.
\begin{lem}\label{lm1121-1}
Let $\dz\in(0,n]$ and $Q$ be a given cube of $\rn$. If $O$ is a relatively open subset of $Q$ such that $\mathcal{H}_{\infty}^{\delta}(O)< \frac{1}{2}\mathcal{H}_{\infty}^{\delta}(Q)$. Then there exists a family $\{Q_j\}_{j\in\nn}\subset \mathcal D_0(Q)$ of non-overlapping dyadic subcubes of $Q$ such that
\begin{enumerate}
\item[{\rm(i)}]
$O\subset\bigcup_{j\in\nn} Q_{j}\subset Q;$
\item[{\rm(ii)}]
 For any $j\in\nn$, we have
$$\mathcal{H}^{\delta}_{\infty}(O\cap Q_{j})< \frac{1}{2}\mathcal{H}^{\delta}_{\infty}(Q_{j})\leq 2^{\dz}\mathcal{H}^{\delta}_{\infty}(O\cap Q_{j}).$$
\end{enumerate}
\end{lem}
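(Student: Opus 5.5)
This is a Calderón--Zygmund stopping-time argument on the dyadic tree $\mathcal D_0(Q)$, using the density-type quantity $\mathcal{H}^{\delta}_{\infty}(O\cap P)/\mathcal{H}^{\delta}_{\infty}(P)$ in place of Lebesgue density.

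\emph{Stopping rule.} Call a cube $P\in\mathcal D_0(Q)$ \emph{good} if
$$\mathcal{H}^{\delta}_{\infty}(O\cap P)<\tfrac12\mathcal{H}^{\delta}_{\infty}(P).$$
By hypothesis, $Q$ itself is good. For each $x\in O$, consider the dyadic cubes $P\in\mathcal D_0(Q)$ containing $x$. The claim is that all sufficiently small such cubes are bad. Since $O$ is relatively open in $Q$, a small $P\ni x$ satisfies $P\subset O$. Then, by Remark \ref{12-4}(i),
$$\mathcal{H}^{\delta}_{\infty}(O\cap P)=\mathcal{H}^{\delta}_{\infty}(P)\ge\tfrac12\mathcal{H}^{\delta}_{\infty}(P).$$
Here $P$ is a left-open right-closed cube inside the open set, and its content equals $l(P)^\delta$. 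So the chain of dyadic cubes from $Q$ down to $x$ starts good and eventually becomes bad. Let $Q_x$ be the \emph{smallest good} cube in this chain, i.e.\ the last good cube before the first bad one. Equivalently, $Q_x$ is the maximal good cube having a bad child containing $x$. To get a clean definition, let $Q_x$ be the good cube in the chain whose dyadic child containing $x$ is the first bad cube along the chain.

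\emph{Non-overlap and (i).} The family $\{Q_j\}$ is the set of distinct cubes $Q_x$, $x\in O$. It is countable since $\mathcal D_0(Q)$ is countable. It covers $O$ and lies in $Q$, which gives (i). Non-overlap is the point needing care, because ``first bad descendant'' selections along different chains could be nested. To fix this, I would instead select maximal cubes among the \emph{bad} cubes. Let $\{P_k\}$ be the maximal bad cubes; these are disjoint and cover $O$. Each $P_k\ne Q$, so its parent $\widehat{P_k}$ is good. Then take $\{Q_j\}$ to be the maximal elements of the set of parents $\{\widehat{P_k}\}$. These are non-overlapping, they cover $O$ (each $P_k$ sits inside some maximal parent), and each $Q_j$ is good. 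This gives the strict inequality in (ii).

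\emph{Lower bound in (ii).} Each $Q_j$ equals $\widehat{P_k}$ for some $k$, with $P_k$ bad and $l(Q_j)=2l(P_k)$. Hence
$$\mathcal{H}^{\delta}_{\infty}(O\cap Q_j)\ge \mathcal{H}^{\delta}_{\infty}(O\cap P_k)\ge\tfrac12 l(P_k)^\delta=\tfrac12 2^{-\delta}l(Q_j)^\delta=2^{-\delta}\cdot\tfrac12\mathcal{H}^{\delta}_{\infty}(Q_j),$$
which is exactly
$$\tfrac12\mathcal{H}^{\delta}_{\infty}(Q_j)\le 2^\delta\mathcal{H}^{\delta}_{\infty}(O\cap Q_j).$$
This step uses only monotonicity of $\mathcal{H}^{\delta}_{\infty}$ and Remark \ref{12-4}(i), so no additivity is needed.

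\emph{Expected main obstacle.} The delicate point is the claim that small dyadic cubes around $x\in O$ are bad. This needs $P\subset O$ for small $P\ni x$, which holds because $O$ is relatively open and the dyadic cubes containing $x$ shrink to $x$. It also needs $\mathcal{H}^{\delta}_{\infty}(P)=l(P)^\delta$ for half-open cubes, which I will take from Remark \ref{12-4}(i).
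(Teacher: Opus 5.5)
Your argument is correct and is essentially the paper's: the paper takes, for each $x\in O$, the smallest good dyadic cube $Q_x^\ast$ containing $x$, so that its child containing $x$ is bad, which gives the $2^{\delta}$ lower bound exactly as you derive it; it then keeps the maximal elements of $\{Q_x^\ast\}$, whereas you stop at the parents of the maximal bad cubes and keep the maximal parents, which works equally well. Your intermediate claim that the ``smallest good cube'' is equivalently the parent of the first bad cube fails when good and bad cubes alternate along the chain, but your final construction does not depend on it.
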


\begin{proof}
For any fixed $x\in O$, let $Q_x^\ast$ be the smallest dyadic subcube of $Q_x\in \cd_0(Q)$ satisfying
\begin{align}\label{eq12-3a}
x\in Q_x \quad and \quad \mathcal{H}^{\delta}_{\infty}(O\cap Q_x)< \frac{1}{2}\mathcal{H}^{\delta}_{\infty}(Q_x).
\end{align}
We claim that such a $Q_x^\ast$ exists. Indeed, since $O$ is a relatively open subset of $Q$ and $\ch_{\fz}^{\dz}(O)<\frac{1}{2}\mathcal{H}^{\delta}_{\infty}(Q)$, it follows that
the cube $Q$ itself satisfies the requirement \eqref{eq12-3a}.
On the other hand, from the open condition of $O$, we deduce that, when such subcube $Q_x$ small enough,
$Q_x \subset O$ and hence $\mathcal{H}^{\delta}_{\infty}(O\cap Q_x)=\ch_{\fz}^{\dz}(Q_x)> \frac{1}{2}\mathcal{H}^{\delta}_{\infty}(Q_x)$.
Therefore, not all small cubes containing $x$ satisfy the requirement \eqref{eq12-3a}, which implies the existence of $Q_x^\ast$.

Next, we prove that
\begin{align}\label{eq1121-2}
\mathcal{H}^{\delta}_{\infty}(Q_x^\ast)\leq 2^{\delta+1} \mathcal{H}^{\delta}_{\infty}(O\cap Q_x^\ast).
\end{align}
We divide $Q_x^\ast$ into $2^n$ dyadic subcubes with side length $\frac{1}{2} l(Q_x^\ast)$,
denoted by $\{Q_{x,j}^\ast\}_{j=1}^{2^n}$. Without loss of generality, we may assume $x \in Q_{x,1}^\ast$.
By the minimality of $Q_x^\ast$, we know that the subcube $Q_{x,1}^\ast$ does not satisfy the requirement \eqref{eq12-3a}, i.e. $\mathcal{H}^{\delta}_{\infty}(O\cap Q_{x,1}^\ast)\ge \frac{1}{2}\mathcal{H}^{\delta}_{\infty}(Q_{x,1}^\ast)$.
Consequently,
\[\mathcal{H}^{\delta}_{\infty}(Q_{x}^\ast)=2^{\dz}\mathcal{H}^{\delta}_{\infty}(Q_{x,1}^\ast)\leq
2^{\dz+1}\mathcal{H}^{\delta}_{\infty}(O\cap Q_{x,1}^\ast)
\le2^{\dz+1}\mathcal{H}^{\delta}_{\infty}(O\cap Q_{x}^\ast).\]
Let
$$G:=\{Q_{x}^\ast:\ \forall\,x\in O\}.$$
Then $G$ is either a countable or a finite set due to that dyadic cubes are countable under the standard grid.
We let $\{Q_j\}_{j}$ be the maximal dyadic cubes of $G$, i.e., if $R$, $R'\in \{Q_j\}_j$, then $R\cap R'=\emptyset$, and for any $Q_x^\ast \in G$, there exists some $Q_j$ such that $Q_x^\ast \subseteq Q_j$. Therefore,
\[O\subset \bigcup_{x\in O}Q_x^\ast=\bigcup_{R\in G}R=\bigcup_{j\in\nn} Q_j.\]
For any $j\in\nn$, since $Q_j=Q_x^\ast$ for some $x\in O$, it follow that $\mathcal{H}^{\delta}_{\infty}(O\cap Q_{j})< \frac{1}{2}\mathcal{H}^{\delta}_{\infty}(Q_{j})$, and by \eqref{eq1121-2}, $\mathcal{H}^{\delta}_{\infty}(Q_j)\leq2^{\delta+1} \mathcal{H}^{\delta}_{\infty}(O\cap Q_j)$. Thus, condition (ii) is satisfied.
The finishes the proof of Lemma \ref{lm1121-1}.
\end{proof}

The following lemma is a variant of \cite[Lemma 2]{0923-7} (see also \cite{0702-4} for the weighted case).
\begin{lem}\label{lm0903}
Let $\dz\in(0,n]$ and $Q$ be a given cube of $\rn$. Suppose $\{Q_{j}\}_{j\in\nn}\subset \mathcal D_0(Q)$ is a family of non-overlapping dyadic subcubes of $Q$. Then there exists a subfamily $\{Q_{j_v}\}_{v\in \Lambda}$ of $\{Q_j\}_{j\in\nn}\subset \mathcal D_0(Q)$ and a sequence $\{Q^*_k\}_{k\in\digamma}\subset \mathcal D_0(Q)$ of non-overlapping dyadic subcubes of $Q$, such that
\begin{enumerate}
\item[{\rm(i)}] $$\bigcup_{j\in\nn}Q_j\subset \lf(\bigcup_{v\in\Lambda}Q_{j_v}\r)\bigcup\lf(\bigcup_{k\in \digamma}Q^*_k\r);$$
\item[{\rm(ii)}]
for any $P\in\mathcal{D}_0(Q)$,
$$\sum_{Q_{j_v}\subset P} \ch_\fz^\delta(Q_{j_v})\le 2 \ch_\fz^\delta(P);$$
\item[{\rm(iii)}]
for any $k\in \digamma$, there exists some $j\in\nn$ such that $Q_j\subset Q^*_k$, and
$${\ch}_\fz^{\delta} \lf(Q^*_k\r)
\le \sum_{Q_{j_v}\subset Q^*_k} {\ch}_\fz^{\delta}(Q_{j_v}).$$
\end{enumerate}
\end{lem}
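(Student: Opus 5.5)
The plan is a stopping-time (packing) selection of Carleson type, run from the largest cubes downward inside the dyadic tree $\mathcal D_0(Q)$. Since the $Q_j$ are non-overlapping dyadic cubes, any two of them are either disjoint or nested. Discarding cubes contained in others, we may therefore assume they are pairwise disjoint. Each $Q_j$ then has a chain of dyadic ancestors up to $Q$, and only countably many cubes of $\mathcal D_0(Q)$ contain some $Q_j$.

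The construction goes as follows. Call a cube $P\in\mathcal D_0(Q)$ \emph{heavy} if
$$\sum_{Q_j\subset P}\ch_\fz^\delta(Q_j)> \ch_\fz^\delta(P).$$
Let $\{Q^*_k\}_{k\in\digamma}$ be the maximal heavy cubes. These exist because every heavy cube contains some $Q_j$, and its heavy ancestors form a finite chain inside $Q$. The $Q^*_k$ are pairwise disjoint by maximality. We then set $\{Q_{j_v}\}_{v\in\Lambda}$ to be all $Q_j$ not contained in any $Q^*_k$, together with all $Q_j$ lying inside some $Q^*_k$.

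Property (iii) is immediate, since a heavy cube contains some $Q_j$ and satisfies the required inequality with $\le$ replaced by a strict one. Property (i) holds because every $Q_j$ is selected.

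The real content is (ii), and here the naive choice above fails. If $P$ is contained in some $Q^*_k$, the sum over $Q_{j_v}\subset P$ may be large relative to $\ch_\fz^\delta(P)$. The fix is to keep, inside each $Q^*_k$, not all the $Q_j$ but only a sub-collection whose total content is comparable to $\ch_\fz^\delta(Q^*_k)$, and to use $Q^*_k$ itself to cover the discarded ones in (i).

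So I refine the construction by iterating the stopping time. Inside each $Q^*_k$, look at the children of $Q^*_k$ and select the $Q_j$ greedily. Alternatively, take $\Lambda$ to consist of those $Q_j$ not contained in any heavy cube, plus, for each $k$, a finite set of $Q_j\subset Q^*_k$ whose contents sum to a number in $[\ch_\fz^\delta(Q^*_k),2\ch_\fz^\delta(Q^*_k)]$. Such a finite set exists because each $\ch_\fz^\delta(Q_j)\le\ch_\fz^\delta(Q^*_k)$ and the full sum exceeds $\ch_\fz^\delta(Q^*_k)$, so adding cubes one at a time stops inside that window. This gives (iii).

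It remains to check (ii) for an arbitrary $P\in\mathcal D_0(Q)$, by cases.
\begin{enumerate}
\item[(a)] If $P\subset Q^*_k$ strictly, the chosen cubes inside $P$ are among at most the chosen finite set. To control this case I choose the finite set greedily by decreasing size, so that the chosen cubes inside any $P\subsetneq Q^*_k$ sum to at most $2\ch_\fz^\delta(P)$. This is the delicate point, and it may require choosing along the children recursively.
\item[(b)] If $P\supset Q^*_k$ for some $k$, or $P$ is disjoint from all the $Q^*_k$, then $P$ is not heavy by maximality. Hence the unselected cubes in $P$ contribute at most $\ch_\fz^\delta(P)$.
\end{enumerate}
In case (b) we still have to add the contributions of the $Q^*_k\subset P$, each at most $2\ch_\fz^\delta(Q^*_k)$. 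Their sum must be bounded by comparing with $\sum_{Q_j\subset P}\ch_\fz^\delta(Q_j)\le\ch_\fz^\delta(P)$.

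I expect the main obstacle to be this bookkeeping, which yields the constant $2$ in (ii): balancing the truncation window inside each $Q^*_k$ against the non-heaviness of the ancestors of $P$. If the direct count fails, I would instead follow the proof of \cite[Lemma 2]{0923-7} and define heaviness relative to the already-selected cubes, processing $\mathcal D_0(Q)$ generation by generation from the top.
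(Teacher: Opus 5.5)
Your main construction has a genuine gap, at exactly the step you call delicate: case (a) is not just hard, it can be impossible. You fix $Q^*_k$ as the maximal cubes that are heavy for the \emph{whole} family. You then ask for a subfamily inside $Q^*_k$ that is $2$-packing at every $P\subsetneq Q^*_k$ and still has total content at least $\ch_\fz^\delta(Q^*_k)$.

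Here is a counterexample. Take $n=1$, $\delta=1/2$, $Q=[0,1)$, $P_0=[0,2^{-4})$, and let $\{Q_j\}$ be the $2^6$ dyadic intervals of length $2^{-10}$ in $P_0$. Then $\sum_j\ch_\fz^{1/2}(Q_j)=2^6\cdot 2^{-5}=2>1=\ch_\fz^{1/2}(Q)$, so $Q$ itself is the maximal heavy cube. But every subfamily lies in $P_0$, and packing at $P=P_0$ caps its total at $2\,\ch_\fz^{1/2}(P_0)=1/2<1$.

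So no greedy-by-size or recursive choice inside $Q^*_k$ can deliver (ii) and (iii) together. The cube in (iii) must sit where the \emph{selected} subfamily saturates ($P_0$ or smaller here), not where the original family is heavy. Your case (b) is fine, and easier than you make it: when $P$ is not heavy, any subfamily of $\{Q_j\subset P\}$ has total at most $\sum_{Q_j\subset P}\ch_\fz^\delta(Q_j)\le\ch_\fz^\delta(P)$.

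The missing idea is to select first and define the stopping cubes afterwards. Your closing fallback points this way, but it is the entire proof. The paper gives no argument here and only cites Lemma 2 of Orobitg--Verdera; the standard proof runs as follows.

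\begin{enumerate}
\item[\textbf{1.}] \textbf{Selection.} With the $Q_j$ pairwise disjoint, go through them in order (e.g.\ by nonincreasing size). Put $Q_j$ into $\mathcal{S}$ if and only if $\sum_{R\in\mathcal{S},\,R\subset P}\ch_\fz^\delta(R)+\ch_\fz^\delta(Q_j)\le 2\ch_\fz^\delta(P)$ for every $P\in\mathcal D_0(Q)$ with $P\supseteq Q_j$.
\item[\textbf{2.}] \textbf{Packing (ii).} Adding $Q_j$ changes these sums only for $P\supseteq Q_j$, since no other member lies inside $Q_j$. So (ii) holds at every stage, and hence for the final $\mathcal{S}$.
\item[\textbf{3.}] \textbf{Rejected cubes.} If $Q_j$ is rejected, the violating $P$ is a strict ancestor, because for $P=Q_j$ the left side is just $\ch_\fz^\delta(Q_j)$. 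Since $\ch_\fz^\delta(Q_j)\le 2^{-\delta}\ch_\fz^\delta(P)$, we get $\sum_{R\in\mathcal{S},\,R\subset P}\ch_\fz^\delta(R)>(2-2^{-\delta})\ch_\fz^\delta(P)\ge\ch_\fz^\delta(P)$. This persists because $\mathcal{S}$ only grows.
\item[\textbf{4.}] \textbf{Stopping cubes, (i) and (iii).} Let $\{Q^*_k\}$ be the maximal cubes of $\mathcal D_0(Q)$ arising as such violating cubes; they exist because every cube has finitely many ancestors in $\mathcal D_0(Q)$. They are pairwise disjoint, and each contains a rejected $Q_j$ and satisfies (iii). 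Together with $\mathcal{S}$ they cover every $Q_j$, which is (i).
\end{enumerate}

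This is also where the constant $2$ in (ii) comes from.
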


The following proposition is derived from variants of \cite[(6) and (7)]{0923-7}.
\begin{prop}\label{prop121-1}
Let $\dz\in(0,n]$ and $Q$ be a given cube of $\rn$. Let $\{Q_{j}\}_{j\in\nn}\subset \mathcal D_0(Q)$ be a family of non-overlapping dyadic subcubes of $Q$ satisfying that, for any $P\in \mathcal{D}_0(Q)$,
\begin{align*}
\sum_{Q_{j}\subset P} \ch_\fz^\delta(Q_{j})\le 2 \ch_\fz^\delta(P).
\end{align*}
Then there exists a positive constant $L=L(n, \dz)$ such that, for any $f\in L^1({\cup_{j\in \nn} Q_{j}},{\ch}_\fz^{\delta})$,
\begin{equation*}
\sum_{j\in \nn}\int_{Q_{j}}|f(x)|\,d{\ch}_\fz^{\delta}\le C\int_{\bigcup_{j\in\nn} Q_{j}}|f(x)|\,d{\ch}_\fz^{\delta}.
\end{equation*}
\end{prop}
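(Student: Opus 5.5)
Write $\Omega:=\bigcup_{j\in\nn}Q_j$. The plan is to reduce the statement to a set-wise inequality,
\begin{equation*}
\sum_{j\in\nn}\ch_\fz^\delta(E\cap Q_j)\le C\,\ch_\fz^\delta(E),\qquad \forall\, E\subset \Omega, \tag{$\ast$}
\end{equation*}
and then integrate over level sets. The reduction goes as follows. Given $(\ast)$, apply it to $E_t:=\{x\in\Omega:|f(x)|>t\}$ for each $t>0$. Note that $\{x\in Q_j:|f(x)|>t\}=E_t\cap Q_j$. By the definition of the Choquet integral and Tonelli's theorem for nonnegative series,
\begin{equation*}
\sum_j\int_{Q_j}|f|\,d\ch_\fz^\delta=\int_0^\fz\sum_j\ch_\fz^\delta(E_t\cap Q_j)\,dt\le C\int_0^\fz \ch_\fz^\delta(E_t)\,dt=C\int_\Omega|f|\,d\ch_\fz^\delta .
\end{equation*}
So everything hinges on $(\ast)$.

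To prove $(\ast)$, I would first pass to the dyadic content. Using \eqref{eq0723a}, it suffices to take an arbitrary covering of $E$ by dyadic cubes $\{R_i\}$ with $\sum_i l(R_i)^\delta\le \widetilde{\ch}_\fz^\delta(E)+\varepsilon\le K\ch_\fz^\delta(E)+\varepsilon$, and to show
\begin{equation*}
\sum_j\ch_\fz^\delta(E\cap Q_j)\le C\sum_i l(R_i)^\delta .
\end{equation*}
We may discard any $R_i$ that does not meet $\Omega$. By dyadic nesting, each $R_i$ meeting some $Q_j$ either is contained in $Q_j$ or contains $Q_j$.

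Next, split the cubes $R_i$ into two classes. Class (a) consists of those $R_i$ contained in some $Q_j$; such a $Q_j$ is unique because the $Q_j$ are non-overlapping. Class (b) consists of those $R_i$ that contain every $Q_j$ they meet. For a fixed $j$, the set $E\cap Q_j$ is covered by the class (a) cubes lying inside $Q_j$ together with $Q_j$ itself, the latter being needed only when $Q_j$ lies inside some class (b) cube. This gives
\begin{equation*}
\ch_\fz^\delta(E\cap Q_j)\le \sum_{R_i\ \text{in (a)},\ R_i\subset Q_j} l(R_i)^\delta+ l(Q_j)^\delta\,\mathbf 1_{\{Q_j\subset R_i\ \text{for some}\ R_i\ \text{in (b)}\}} .
\end{equation*}
Summing over $j$, the class (a) terms contribute at most $\sum_i l(R_i)^\delta$, since each class (a) cube is counted once. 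For the second term, we may replace the class (b) cubes by the maximal ones among them (these are disjoint). Each $Q_j$ contained in some class (b) cube then sits inside a unique maximal class (b) cube $R$, and the sum is bounded by
\begin{equation*}
\sum_{R\ \text{maximal in (b)}}\ \sum_{Q_j\subset R}\ch_\fz^\delta(Q_j).
\end{equation*}

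The main obstacle, where the packing hypothesis enters, is to bound this inner sum by $2\,l(R)^\delta$. If $R\subset Q$, then $R\in\mathcal D_0(Q)$ and the hypothesis gives $\sum_{Q_j\subset R}\ch_\fz^\delta(Q_j)\le 2\ch_\fz^\delta(R)$ directly. If $R\supsetneq Q$, then the $Q_j$ inside $R$ are all of them, and the hypothesis with $P=Q$ bounds their total by $2\ch_\fz^\delta(Q)\le 2\,l(R)^\delta$. There is one subtlety when $Q$ is not itself dyadic: the grid $\mathcal D(Q)$ should be used for the covering, which is fine because \eqref{eq0723a} holds for any translated and dilated dyadic grid with the same $K$. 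Combining the two classes yields $(\ast)$ with $C=3K$, and letting $\varepsilon\to0$ completes the argument.
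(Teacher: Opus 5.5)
Your proof is correct and is essentially the argument the paper relies on: the paper writes out no proof and cites (6)--(7) of Orobitg--Verdera, which is exactly a set-wise bound $\sum_j\ch_\fz^\delta(E\cap Q_j)\le C\,\ch_\fz^\delta(E)$ obtained by splitting a near-optimal dyadic covering into cubes lying inside some $Q_j$ and cubes containing $Q_j$'s, followed by integration over level sets. Your use of the grid $\mathcal D(Q)$, and of the packing condition at $P=Q$ for covering cubes strictly containing $Q$, supplies the ``variant'' needed here; the passage to maximal class (b) cubes is harmless but unnecessary, since the packing bound can be applied to each class (b) cube separately.
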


Now we are ready to show Theorem \ref{tm1127-1}.
\begin{proof}[Proof of Theorem \ref{tm1127-1}]
Let $Q$ be a given cube of $\rn$ and $f\in {\rm BMO}^\delta(Q)$.
According to Remark \ref{rem-1.1}(iii), we may assume that $f$ is a non-negative function on $Q$.

We claim that there exists a positive constant $C$ such that, for any $t\in (0, \frac{1}{6K^2}\mathcal{H}^{\delta}_{\infty}(Q))$,
\begin{align}\label{eq1123-1}
f^{**, Q}_{\ch_\fz^{\dz}}f(t)-f^{*, Q}_{\ch_\fz^{\dz}}f(t)\leq C({\cm}^{\#, Q}_{\ch_{\fz}^{\dz}}f)^{*, Q}_{\ch_\fz^{\dz}}(t),
\end{align}
where $K$ is as in \eqref{eq0723a}.

Using this claim, we conclude that, when $t\in (0,\frac{1}{6K^2}\mathcal{H}^{\delta}_{\infty}(Q))$,
\begin{equation}\label{eq1127-1}
f^{**, Q}_{\ch_\fz^{\dz}}(t)-f^{*, Q}_{\ch_\fz^{\dz}}(t)\ls \|f\|_{{\rm {BMO}}^\delta(Q)}.
\end{equation}
Indeed, since
$$\ch_{\fz}^{\dz}\lf(\lf\{x\in Q: {\cm}^{\#, Q}_{\ch_{\fz}^{\dz}}f(x) >\|f\|_{{\rm {BMO}}^\delta(Q)}\r\}\r)=0$$
due to (i) of Remark \ref{rem-1.1}, it follows that
\[\|f\|_{{\rm {BMO}}^\delta(Q)}\in\lf\{\lambda>0: \ch_{\fz}^{\dz}\lf(\lf\{x\in Q: {\cm}^{\#, Q}_{\ch_{\fz}^{\dz}}f(x) >\lambda\r\}\r)\leq t\r\},\]
which implies that
\begin{align*}
({\cm}^{\#, Q}_{\ch_{\fz}^{\dz}}f)^{*, Q}_{\ch_\fz^\delta}(t)\leq \|f\|_{{\rm {BMO}}^\delta(Q)}.
\end{align*}
Combining this and the above claim, we find that \eqref{eq1127-1} holds true for all
$t\in (0,\frac{1}{6K^2}\mathcal{H}^{\delta}_{\infty}(Q))$.
On the other hand, if $t\in [\frac{1}{6K^2}\mathcal{H}^{\delta}_{\infty}(Q),\fz)$,
then by (vii) of Proposition \ref{2.2} we have
\begin{align*}
f^{**, Q}_{\ch_\fz^{\dz}}(t)-f^{*, Q}_{\ch_\fz^{\dz}}(t)
&\leq f^{**, Q}_{\ch}\lf(\frac{\mathcal{H}^{\delta}_{\infty}(Q)}{6K^2}\r)
=\frac{6K^2}{\ch_{\fz}^{\dz}(Q)}\int_0^{\frac{\mathcal{H}^{\delta}_{\infty}(Q)}{6K^2}}f^{*, Q}_{\ch_\fz^{\dz}}(t)\,dt\\
&\leq \frac{6K^3}{\ch_{\fz}^{\dz}(Q)}\int_{Q}|f(x)|\,d\ch_{\fz}^{\dz}.\noz
\end{align*}
Therefore, the conclusion of Theorem \ref{tm1127-1} holds true.

It remains to prove the above claim \eqref{eq1123-1}.
For any given $t\in (0, \frac{1}{6K^2}\mathcal{H}^{\delta}_{\infty}(Q))$, let
\begin{align*}
E:=\lf\{x\in Q: f(x)>f^{*, Q}_{\ch_\fz^{\dz}}(t)\r\}\quad {\rm{and}} \quad F:=\lf\{x\in Q: {\cm}^{\#, Q}_{\ch_{\fz}^{\dz}}f(x)>({\cm}^{\#, Q}_{\ch_{\fz}^{\dz}}f)^{*, Q}_{\ch_\fz^{\dz}}(t)\r\}.
\end{align*}
Since, for any $\lambda\ge f^{*, Q}_{\ch_\fz^{\dz}}(t)$,
\begin{align*}
\{x\in Q: |f(x)|>\lambda\}=\{x\in E: |f(x)|>\lambda\}
\end{align*}
and
\[\lf\{s\in [0,t):f^{*, Q}_{\ch_\fz^{\dz}}(s)>\lambda\r\}=\lf\{s\in [0,\fz):f^{*, Q}_{\ch_\fz^{\dz}}(s)>\lambda\r\},\]
it follows from Proposition \ref{2.2} (vi) that
\begin{align}\label{eq1126-10}
t\lf({f^{**, Q}_{\ch_\fz^{\dz}}(t)-f^{*, Q}_{\ch_\fz^{\dz}}(t)}\r)
&=\int_0^t\lf[f^{*, Q}_{\ch_\fz^{\dz}}(s)-f^{*, Q}_{\ch_\fz^{\dz}}(t)\r]\,ds\\
&=\int_0^{\fz}\lf|\lf\{s\in [0,t):f^{*, Q}_{\ch_\fz^{\dz}}(s)-f^{*, Q}_{\ch_\fz^{\dz}}(t)>\lambda\r\}\r|\,d\lambda\noz\\
&=\int_{f^{*, Q}_{\ch_\fz^{\dz}}(t)}^{\fz}\lf|\lf\{s\in [0,\fz):f^{*, Q}_{\ch_\fz^{\dz}}(s)>\lambda\r\}\r|\,d\lambda\noz\\
&\sim K\int_{f^{*, Q}_{\ch_\fz^{\dz}}(t)}^{\fz}\ch_{\fz}^{\dz}(\{x\in Q: f(x)>\lambda\})\,d\lambda\noz\\
&=K\int_{E}\lf[f(x)-f^{*, Q}_{\ch_\fz^{\dz}}(t)\r]\,d\ch_{\fz}^{\dz}=:\rm{I}.\noz
\end{align}
To complete the above claim \eqref{eq1123-1}, we need to estimate $\rm{I}$. According to Proposition \ref{2.2}(i), we have
\begin{align*}
\mathcal{H}^{\delta}_{\infty}(E)\leq Kt\quad {\rm and}\quad
\mathcal{H}^{\delta}_{\infty}(F)\leq Kt,
\end{align*}
and hence by \eqref{eq0723c}
\begin{align}\label{eq1124-1}
\widetilde{\ch}_{\circ}^{\dz}(E\cup F)
\le K\mathcal{H}^{\delta}_{\infty}(E\cup F)
\leq K\lf(\mathcal{H}^{\delta}_{\infty}(E)+\mathcal{H}^{\delta}_{\infty}(F)\r)\leq 2K^2t.
\end{align}
By the definition of the Hausdorff content
$\widetilde{\ch}^{\dz}_{\circ}$, we know that there exists a family of dyadic cubes $\{P_{j}\}_{j\in\nn}$ of $\rn$ such that $E\cup F \subset (\cup_{j\in\nn}P_j)^{\circ}$ and
\begin{align}\label{eq1124-2}
\sum_{j\in\nn}l(P_{j})^{\dz}\leq \widetilde{\ch}_{\circ}^{
\dz}(E\cup F)+K^2t.
\end{align}
Let $O:=(\cup_{j\in\nn}{P}_{j})^{\circ}\cap Q$. Then $O$ is a relatively open subset of $Q$ and $E\cup F \subset O \subset Q$. Moreover, by \eqref{eq1124-2} and \eqref{eq1124-1},
\begin{align*}
\mathcal{H}^{\delta}_{\infty}(O)
\leq \ch_{\fz}^{\dz}\lf(\bigcup_{j\in\nn}P_j\r)
\le \sum_{j\in\nn}l(P_{j})^{\dz}
\leq \widetilde{\ch}_{\circ}^{
\dz}(E\cup F)+K^2t\leq 3K^2t
<\frac{1}{2}\ch_{\fz}^{\dz}(Q).
\end{align*}

Now, by Lemma \ref{lm1121-1}, there exists a family $\{Q_j\}_{j\in\nn}$ of non-overlapping dyadic subcubes of $Q$ such that
\begin{equation}\label{eq413-a}
O\subset\bigcup_{j\in\nn} Q_{j}\subset Q
\end{equation}
and
\begin{equation}\label{eq413-b}
\mathcal{H}^{\delta}_{\infty}(O\cap Q_{j})< \frac{1}{2}\mathcal{H}^{\delta}_{\infty}(Q_{j})\leq 2^{\dz}\mathcal{H}^{\delta}_{\infty}(O\cap Q_{j}),\quad \forall\,j\in\nn.
\end{equation}
For such $\{Q_j\}_{j\in\nn}$, applying Lemma \ref{lm0903}, there exists a subfamily $\{Q_{j_v}\}_{v\in\Lambda}$ of $\{Q_j\}_{j\in\nn}$ and a sequence $\{Q^*_k\}_{k\in \digamma}$ of non-overlapping dyadic subcubes of $Q$, such that
\begin{equation}\label{eq413-c}
\bigcup_{j\in\nn}Q_j\subset \lf(\bigcup_{v\in\Lambda}Q_{j_v}\r)\cup\lf(\bigcup_{k\in\digamma}Q^*_k\r),
\end{equation}
\begin{equation*}
\sum_{Q_{j_v}\subset P} \ch_\fz^\delta(Q_{j_v})\le 2 \ch_\fz^\delta(P),\quad \forall\,P\in\mathcal{D}_0(Q),
\end{equation*}
and, for any $k\in \digamma$, there exists some $j\in\nn$ such that
\begin{equation}\label{eq413-e}
Q_j\subset Q^*_k\quad {\rm and}\quad
{\ch}_\fz^{\delta} \lf(Q^*_k\r)
\le \sum_{Q_{j_v}\subset Q^*_k} {\ch}_\fz^{\delta}(Q_{j_v}).
\end{equation}
For any $j\in\nn$, since $\ch_{\fz}^{\dz}(Q_j)\leq \ch_{\fz}^{\dz}(O\cap Q_j)+\ch_{\fz}^{\dz}(O^{\complement}\cap Q_j)$,
it follows from \eqref{eq413-b} that
\begin{align}\label{eq1126-7}
\ch_{\fz}^{\dz}\lf(O^{\complement}\cap Q_j\r)>\frac{1}{2}\mathcal{H}^{\delta}_{\infty}(Q_{j})>0,
\end{align}
which implies $O^{\complement}\cap Q_j\neq \emptyset$. Here and hereafter, $O^\complement:=\rn\setminus O$
and $F^\complement:=\rn\setminus F$.
Hence, for any $v\in\Lambda$, $F^{\complement}\cap Q_{j_v}\neq \emptyset$ and,
moreover, by \eqref{eq413-e}, for any $k\in \digamma$, $F^{\complement}\cap Q^*_k\neq \emptyset$.

For any $v\in\Lambda$ and any $k\in \digamma$, let $x_v\in F^{\complement}\cap Q_{j_v}$ and $y_k\in F^{\complement}\cap Q_{k}^*$.
For any $\varepsilon>0$, there exist constants $c_v,d_k\in \mathbb{R}$ such that
\begin{align}\label{eq1126-5}
\frac{1}{\ch_{\fz}^{\dz}(Q_{j_v})}\int_{Q_{j_v}}|f(x)-c_v|\,d\ch_{\fz}^{\dz}
&\leq \inf_{c\in\mathbb R}\frac{1}{\ch_{\fz}^{\dz}(Q_{j_v})}\int_{Q_{j_v}}|f(x)-c|\,d\ch_{\fz}^{\dz}+\varepsilon\\
&\leq {\cm}^{\#, Q}_{\ch_{\fz}^{\dz}}f(x_v)+\varepsilon
\leq({\cm}^{\#, Q}_{\ch_{\fz}^{\dz}}f)^{*, Q}_{\ch_\fz^{\dz}}(t)+\varepsilon,
\noz
\end{align}
and
\begin{align}\label{eq1126-6}
\frac{1}{\ch_{\fz}^{\dz}(Q^*_{k})}\int_{Q^*_{k}}|f(x)-d_k|\,d\ch_{\fz}^{\dz}
&\leq \inf_{c\in\mathbb R}\frac{1}{\ch_{\fz}^{\dz}(Q^*_{k})}\int_{Q^*_{k}}|f(x)-c|\,d\ch_{\fz}^{\dz}+\varepsilon\\
&\leq {\cm}^{\#, Q}_{\ch_{\fz}^{\dz}}f(y_k)+\varepsilon
\leq({\cm}^{\#, Q}_{\ch_{\fz}^{\dz}}f)^{*, Q}_{\ch_\fz^{\dz}}(t)+\varepsilon.
\noz
\end{align}
Although such $c_v,d_k$ may be not unique, without loss of generality, we select one of them and fix it.

Let
$$A:=\lf\{v\in \Lambda:\ c_v\leq f^{*, Q}_{\ch_\fz^{\dz}}(t)\r\},\ B:=\lf\{v\in \Lambda:\ c_v> f^{*, Q}_{\ch_\fz^{\dz}}(t)\r\},$$
and
$$C:=\lf\{k\in \digamma:\ d_k\leq f^{*, Q}_{\ch_\fz^{\dz}}(t)\r\},\ D:=\lf\{k\in \digamma:\ d_k> f^{*, Q}_{\ch_\fz^{\dz}}(t)\r\}.$$
Then, by \eqref{eq413-a}, \eqref{eq413-c}, \eqref{eq0904} and (ii) of Remark \ref{12-4}, we have
\begin{align}\label{eq1126-9}
{\rm{I}}&\leq \sum_{v\in \Lambda}\int_{E\cap Q_{j_v}}\lf[f(x)-f^{*, Q}_{\ch_\fz^{\dz}}(t)\r]\,d\ch_{\fz}^{\dz}+\sum_{k\in \digamma}\int_{E\cap Q^*_{k}}\lf[f(x)-f^{*, Q}_{\ch_\fz^{\dz}}(t)\r]\,d\ch_{\fz}^{\dz}\\
&\leq \sum_{v\in A}\int_{E\cap Q_{j_v}}\lf[f(x)-c_v\r]\,d\ch_{\fz}^{\dz}+\sum_{v\in B}\int_{E\cap Q_{j_v}}\lf[|f(x)-c_v|+c_v-f^{*, Q}_{\ch_\fz^{\dz}}(t)\r]\,d\ch_{\fz}^{\dz}\noz\\
&\quad +\sum_{k\in C}\int_{E\cap Q^*_k}\lf[f(x)-d_k\r]\,d\ch_{\fz}^{\dz}+\sum_{k\in D}\int_{E\cap Q^*_k}\lf[|f(x)-d_k|+d_k-f^{*, Q}_{\ch_\fz^{\dz}}(t)\r]\,d\ch_{\fz}^{\dz}\noz\\
&\leq \sum_{v\in \Lambda}\int_{Q_{j_v}}|f(x)-c_v|\,d\ch_{\fz}^{\dz}+\sum_{v\in B}\lf[c_v-f^{*, Q}_{\ch_\fz^{\dz}}(t)\r]\ch_{\fz}^{\dz}(E\cap Q_{j_v})\noz\\
&\quad+\sum_{k\in \digamma}\int_{Q^*_k}|f(x)-d_k|\,d\ch_{\fz}^{\dz}+\sum_{k\in D}\lf[d_k-f^{*, Q}_{\ch_\fz^{\dz}}(t)\r]\ch_{\fz}^{\dz}(E\cap Q^*_k)\noz\\
&=:\rm{I_1+I_2+I_3+I_4}.\noz
\end{align}
For ${\rm{I_2}}$, since $E\subset O$, it follows from \eqref{eq413-b} and \eqref{eq1126-7} that
\begin{align}\label{eq1126-8}
{\rm I}_2&\leq\sum_{v\in B}\lf[c_v-f^{*, Q}_{\ch_\fz^{\dz}}(t)\r]\ch_{\fz}^{\dz}(O\cap Q_{j_v})\leq \sum_{v\in B}\int_{O^{\complement}\cap Q_{j_v}}\lf[c_v-f^{*, Q}_{\ch_\fz^{\dz}}(t)\r]\,d\ch_{\fz}^{\dz}\\
&\leq \sum_{v\in B}\int_{O^{\complement}\cap Q_{j_v}}\lf[c_v-f(x)\r]\,d\ch_{\fz}^{\dz}\leq {\rm{I_1}}.\noz
\end{align}
For ${\rm{I_4}}$, applying Proposition \ref{prop121-1} to cubes $\{Q_{j_v}\}_{Q_{j_v}\subset Q_k^*}$ with any $k\in D$,
 by \eqref{eq413-b} and \eqref{eq1126-7}, we find that
\begin{align*}
{\rm I}_4&\leq\sum_{k\in D}\lf[d_k-f^{*, Q}_{\ch_\fz^{\dz}}(t)\r]\ch_{\fz}^{\dz}(Q^*_k)\leq\sum_{k\in D}\lf[d_k-f^{*, Q}_{\ch_\fz^{\dz}}(t)\r]\sum_{Q_{j_v}\subset Q^*_k} {\ch}_\fz^{\delta}(Q_{j_v})\\
&\leq2^{\dz+1}\sum_{k\in D}\lf[d_k-f^{*, Q}_{\ch_\fz^{\dz}}(t)\r]\sum_{Q_{j_v}\subset Q^*_k} {\ch}_\fz^{\delta}(O\cap Q_{j_v})\\
&\leq2^{\dz+1}\sum_{k\in D}\sum_{Q_{j_v}\subset Q^*_k}\int_{Q_{j_v}}\lf[d_k-f^{*, Q}_{\ch_\fz^{\dz}}(t)\r]\mathbf{1}_{O^{\complement}\cap Q^*_k}(x)\,d{\ch}_\fz^{\delta}\\
&\ls2^{\dz+1}\sum_{k\in D}\int_{O^{\complement}\cap Q^*_{k}}\lf[d_k-f^{*, Q}_{\ch_\fz^{\dz}}(t)\r]\,d{\ch}_\fz^{\delta}\\
&\ls2^{\dz+1}\sum_{k\in D}\int_{O^{\complement}\cap Q^*_{k}}\lf[d_k-f(x)\r]\,d{\ch}_\fz^{\delta}
\ls 2^{\dz+1}{\rm I}_3.
\end{align*}
From this, combining \eqref{eq1126-8}, \eqref{eq1126-9}, \eqref{eq1126-5}, \eqref{eq1126-6}, \eqref{eq413-b}
and Proposition \ref{prop121-1} again, we deduce that
\begin{align*}
{\rm I}&\ls 2{\rm I}_1+(2^{\dz+1}+1){\rm I}_3\\
&\ls \lf(({\cm}^{\#, Q}_{\ch_{\fz}^{\dz}}f)^{*, Q}_{\ch_\fz^{\dz}}(t)+\varepsilon\r)\lf\{\sum_{v\in \Lambda}\ch_{\fz}^{\dz}(Q_{j_v})+\sum_{k\in \digamma}\ch_{\fz}^{\dz}(Q^*_{k})\r\}\\
&\ls \lf(({\cm}^{\#, Q}_{\ch_{\fz}^{\dz}}f)^{*, Q}_{\ch_\fz^{\dz}}(t)+\varepsilon\r)\lf\{\sum_{v\in \Lambda}\ch_{\fz}^{\dz}(Q_{j_v})+\sum_{k\in \digamma}\sum_{Q_{j_v}\subset Q^*_k} {\ch}_\fz^{\delta}(Q_{j_v})\r\}\\
&\ls\sum_{v\in \Lambda}\ch_{\fz}^{\dz}(Q_{j_v})\lf(({\cm}^{\#, Q}_{\ch_{\fz}^{\dz}}f)^{*, Q}_{\ch_\fz^{\dz}}(t)+\varepsilon\r)\ls\sum_{v\in \Lambda}\int_{Q_{j_v}}\mathbf{1}_{O}(x)\,d\ch_{\fz}^{\dz}\lf(({\cm}^{\#, Q}_{\ch_{\fz}^{\dz}}f)^{*, Q}_{\ch_\fz^{\dz}}(t)+\varepsilon\r)\\
&\ls\int_{\cup_{v\in \Lambda}Q_{j_v}}\mathbf{1}_{O}(x)\,d\ch_{\fz}^{\dz}\lf(({\cm}^{\#, Q}_{\ch_{\fz}^{\dz}}f)^{*, Q}_{\ch_\fz^{\dz}}(t)+\varepsilon\r)\ls\ch_{\fz}^{\dz}(O)\lf(({\cm}^{\#, Q}_{\ch_{\fz}^{\dz}}f)^{*, Q}_{\ch_\fz^{\dz}}(t)+\varepsilon\r)\\
& \ls t\lf(({\cm}^{\#, Q}_{\ch_{\fz}^{\dz}}f)^{*, Q}_{\ch_\fz^{\dz}}(t)+\varepsilon\r).
\end{align*}
By this and the arbitrariness of \(\varepsilon>0\), together with \eqref{eq1126-10}, we immediately obtain the desired claim \eqref{eq1123-1}. This finishes the proof of Theorem \ref{tm1127-1}.
\end{proof}

\section{Maximal Operator on $wL^\fz(Q,\ch_\fz^\delta)$\label{Sec5}}

Let $\dz\in (0, n]$ and $Q$ be a given cube of $\rn$. If $f\in L^1(Q, \ch_{\fz}^{\dz})$,
then its \emph{local Hausdorff content Hardy-Littlewood maximal function} is defined by
\begin{align*}
\cm^{Q}_{\widetilde\ch_{\fz}^{\dz}}f(x):=\sup_{P\ni x\ and\ P\subset Q}\frac{1}{\widetilde\ch_{\fz}^{\dz}(P)}\int_{P}|f(y)|\,d\widetilde\ch_{\fz}^{\dz},\quad \forall x\in Q,
\end{align*}
where the supremum is taken over all cube $P\subset Q$ containing $x$.
Particularly, when $Q=\rn$, it is the Hausdorff content maximal operator, denoted simply by $\cm_{\widetilde\ch_{\fz}^{\dz}}$.

\begin{rem}\label{rem122-2} Let $\dz\in (0, n]$ and $Q$ be a given cube of $\rn$.
\begin{enumerate}
\item[{\rm(i)}]
If $f$ is a function defined on $Q$, then by \eqref{eq0723a}, it is clear to know that
\[\cm^{Q}_{\widetilde\ch_{\fz}^{\dz}}f(x)\leq \cm_{\widetilde\ch_{\fz}^{\dz}}f(x)\leq K\cm_{\ch_{\fz}^{\dz}}f(x),\quad \forall x\in Q, \]
where we regard $f$ as zero on $\rn\setminus Q$. Furthermore, according to \cite[Theorem 1.2]{0923-3}, we know that there exists a positive constant $C$ such that, for any $g\in L^1(\rn, \ch_{\fz}^{\dz})$,
\[\ch_{\fz}^{\dz}\lf(\lf\{x\in \rn: \cm_{\ch_{\fz}^{\dz}}g(x)>t\r\}\r)\leq \frac{C}{t}\|g\|_{L^1({\rn, \ch_{\fz}^{\dz}})},\quad \forall t\in (0,\fz). \]
Therefore,
\begin{align}\label{eq122-5}
\ch_{\fz}^{\dz}\lf(\lf\{x\in Q: \cm^{Q}_{\widetilde\ch_{\fz}^{\dz}}f(x)>t\r\}\r)\leq \frac{KC}{t}\|f\|_{L^1({Q, \ch_{\fz}^{\dz}})},\quad \forall t\in (0,\fz),
\end{align}
where $K$ is as in \eqref{eq0723a}.

\item[{\rm(ii)}]
It is not difficult to find that, for any $f\in L^{\fz}(Q, \ch_{\fz}^{\dz})$,
\begin{align}\label{eq122-6}
\|\cm^{Q}_{\widetilde\ch_{\fz}^{\dz}}f\|_{L^{\fz}(Q, \ch_{\fz}^{\dz})}\leq \|f\|_{L^{\fz}(Q, \ch_{\fz}^{\dz})}.
\end{align}
\item[{\rm(iii)}]
Following the argument of \cite[Lemma 4.3]{0702-4} and optimizing the constant estimates in derivation, we obtain an improved inequality:
For any $f\in L^1(Q,\ch_{\fz}^{\dz})$, the inequality
\[\cm^{Q}_{\widetilde\ch_{\fz}^{\dz}}f(x)\ge |f(x)|\]
holds for $\widetilde\ch_{\fz}^{\dz}$-almost every $x\in Q$.
\end{enumerate}
\end{rem}

\begin{thm}\label{tm1127-2}
Let $\dz\in(0, n]$ and $Q$ be a given dyadic cube of $\rn$.
Then there exists a positive constant $C$ such that, for any $f\in wL^\fz(Q, \ch_{\fz}^{\dz})$,
\begin{align}\label{eq1128-6}
\|\cm^{Q}_{\widetilde\ch_{\fz}^{\dz}}f\|_{wL^\fz(Q, \ch_{\fz}^{\dz})}\leq C\|f\|_{wL^\fz(Q, \ch_{\fz}^{\dz})}.
\end{align}
Moreover, the result also holds true with  $Q$ replaced by $\mathbb{R}^{n}$.
\end{thm}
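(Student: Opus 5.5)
The plan follows the Bennett--DeVore--Sharpley approach to boundedness of the maximal operator on weak-$L^\fz$. The aim is a pointwise rearrangement inequality of Herz/Riesz type for $\cm^{Q}_{\widetilde\ch_{\fz}^{\dz}}$, combined with a Calder\'on-type decomposition of $f$ at height $f^{*,Q}_{\ch_\fz^\delta}(t)$. Write $\cm:=\cm^{Q}_{\widetilde\ch_{\fz}^{\dz}}$ and fix $t\in(0,\fz)$. As in the proof of Proposition \ref{prop59-1}, split $f=g+h$ with
\[
g:=\big[|f|-f^{*,Q}_{\ch_\fz^\delta}(t)\big]\mathbf 1_{F_t}\,{\rm sgn} f,\qquad h:=\min\{|f|,f^{*,Q}_{\ch_\fz^\delta}(t)\}\,{\rm sgn} f,
\]
where $F_t=\{x\in Q:\ |f(x)|>f^{*,Q}_{\ch_\fz^\delta}(t)\}$. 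By Proposition \ref{2.2}(i), $\ch_\fz^\delta(F_t)\le Kt$.

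\textbf{Step 1: the upper bound.} Since $\widetilde\ch_\fz^\delta$ gives a sublinear Choquet integral by \eqref{eq0904}, $\cm f\le \cm g+\cm h$ pointwise. By Proposition \ref{2.2}(viii), $(\cm f)^{*,Q}_{\ch_\fz^\delta}(2s)\le (\cm g)^{*,Q}_{\ch_\fz^\delta}(s)+(\cm h)^{*,Q}_{\ch_\fz^\delta}(s)$. Estimate $\cm h\le f^{*,Q}_{\ch_\fz^\delta}(t)$ by \eqref{eq122-6}, and use the weak $(1,1)$ bound \eqref{eq122-5} for $g$. Together with \eqref{eq1126-10}, which gives
\[
\|g\|_{L^1(Q,\ch_\fz^\delta)}\sim t\big(f^{**,Q}_{\ch_\fz^\delta}(t)-f^{*,Q}_{\ch_\fz^\delta}(t)\big)\le t\|f\|_{wL^\fz},
\]
this yields $(\cm g)^{*,Q}_{\ch_\fz^\delta}(s)\ls t\|f\|_{wL^\fz}/s$. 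Integrating over $s\in(0,t)$ gives only a logarithmically divergent bound, so I instead work with $**$ directly. I will show $(\cm g)^{**}(t)\ls t^{-1}\|g\|_{L^1}$ plus the $L\log L$-type tail. Then, following \cite{BDS-1}, I redo the decomposition at each level $u\le t$ to reach the Herz-type estimate
\[
(\cm f)^{**,Q}_{\ch_\fz^\delta}(t)\ls f^{**,Q}_{\ch_\fz^\delta}(t)+\int_t^{\ch_\fz^\delta(Q)}\big(f^{**}-f^{*}\big)(u)\,\frac{du}{u}.
\]
This is the analogue of the Stein--Herz inequality $(\cm f)^*\sim f^{**}$. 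I plan to get the upper half $(\cm f)^{*}(t)\ls f^{**}(ct)$ exactly from the splitting above, with $g$ defined at height $f^*(t)$ and the weak $(1,1)$ bound.

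\textbf{Step 2: the lower bound.} Use Remark \ref{rem122-2}(iii), $\cm f\ge |f|$ a.e., together with Proposition \ref{2.2}(iv), to get $(\cm f)^*\ge f^*$. Then
\[
(\cm f)^{**}(t)-(\cm f)^*(t)\le (\cm f)^{**}(t)-f^*(t)\ls f^{**}(ct)-f^{*}(t)+\text{(tail)}.
\]
The dilation $c$ is handled by $f^{**}(ct)-f^*(t)\le f^{**}(ct)-f^{*}(ct)+\big(f^*(ct)-f^*(t)\big)$ for $c\le1$. Alternatively, choose the splitting height $f^*(t/c)$ so that both sides are evaluated at comparable times. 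The needed identity is $f^*(\tau)-f^*(t)\le \int_\tau^{t}(f^{**}-f^*)(u)\,du/u\cdot$const, valid since $\frac{d}{du}f^{**}=-(f^{**}-f^*)/u$ and $f^*\le f^{**}$. For $\tau=t/c$ comparable to $t$ this is $\ls\|f\|_{wL^\fz}$.

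\textbf{Main obstacle and the $\rn$ case.} The main obstacle is that the constants $K$ from \eqref{eq0723a} enter Proposition \ref{2.2}(i),(v),(vi). Every comparison therefore incurs a time dilation $t\mapsto Kt$, and these dilations must be absorbed using the above logarithmic-derivative identity over intervals $[t,K^mt]$, which costs only $m\log K\,\|f\|_{wL^\fz}$. I will first prove everything for $\widetilde\ch_\fz^\delta$-rearrangements, where $K=1$ by Remark \ref{rem59-1}. I will then transfer to $\ch_\fz^\delta$, using that $f^*_{\ch}(t)\le f^*_{\widetilde\ch}(t)\le f^*_{\ch}(t/K)$, and absorb the dilation as above. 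For $\rn$ in place of $Q$, the same argument applies verbatim, with \eqref{eq122-5} on $\rn$ from \cite{0923-3}, and the range restriction $t<\ch_\fz^\delta(Q)$ simply disappears.
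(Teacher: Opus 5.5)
Your ingredients are the right ones: the splitting $f=g+h$ at height $f^*$, the bound \eqref{eq122-6} for $h$, the weak-type bound \eqref{eq122-5} for $g$, the estimate $\|g\|_{L^1}\lesssim t\,(f^{**}(t)-f^*(t))$ from \eqref{eq1126-10}, and $(\mathcal M f)^*\ge f^*$. The way you assemble them, however, does not close.

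\textbf{The gap.} The quantity $f^{**}-f^*$ is a difference of two terms that are separately unbounded for typical elements of the weak-$L^\infty$ space, so it is not stable under multiplicative constants. Suppose you have a Herz-type bound $(\mathcal M f)^{**}(t)\le C_1 f^{**}(t)+C_2\,(\text{tail})$ with $C_1>1$. It does not give your Step 2 inequality $(\mathcal M f)^{**}(t)-f^*(t)\lesssim f^{**}(ct)-f^*(t)+(\text{tail})$, because the term $(C_1-1)f^*(t)$ is left over. Take a function with a logarithmic singularity, e.g.\ one with $f^*(s)=\log(2^\delta/s)$ for small $s$. Then $f^{**}-f^*=1$ there, while $f^*(s)\to\infty$.

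The same example shows your tail is not harmless either. Indeed, $\int_t^{\mathcal H^\delta_\infty(Q)}(f^{**}-f^*)(u)\,\frac{du}{u}=f^{**}(t)-f^{**}(\mathcal H^\delta_\infty(Q))$, which grows like $\log(1/t)$. So even a correct Herz-type estimate of the form you write cannot give a bound uniform in $t$. The $K$-dilations you single out as the main obstacle are not the real difficulty; this constant is.

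\textbf{The missing idea.} Keep the coefficient of the $L^\infty$ piece exactly $1$, and let all constants fall on the $L^1$ piece, which is controlled by the oscillation $f^{**}-f^*$ rather than by $f^{**}$. For each $s>0$, split at height $f^*(s)$ rather than at the fixed height $f^*(t)$; the fixed height is what produced your logarithmic divergence. Then use Proposition \ref{2.2}(viii) with $(t_1,t_2)=(s,0)$:
\[
(\mathcal M f)^*(s)\le(\mathcal M g_s)^*(s)+\|\mathcal M h_s\|_{L^\infty(Q,\mathcal H^\delta_\infty)}\le \frac{CK}{s}\|g_s\|_{L^1(Q,\mathcal H^\delta_\infty)}+f^*(s)\le CK^2\big(f^{**}(s)-f^*(s)\big)+f^*(s).
\]
Together with $(\mathcal M f)^*\ge f^*$, this gives $0\le(\mathcal M f)^*(s)-f^*(s)\le CK^2\|f\|_{wL^\infty(Q,\mathcal H^\delta_\infty)}$ for every $s$. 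Averaging over $s\in(0,t)$ then yields
\[
(\mathcal M f)^{**}(t)-(\mathcal M f)^*(t)\le\frac1t\int_0^t\big[(\mathcal M f)^*(s)-f^*(s)\big]\,ds+f^{**}(t)-f^*(t)\le(CK^2+1)\|f\|_{wL^\infty(Q,\mathcal H^\delta_\infty)}.
\]
This is the paper's argument. No Herz inequality, no tail term and no passage through $\widetilde{\mathcal H}^\delta_\infty$-rearrangements is needed.

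Separately, your identity bounding $f^*(\tau)-f^*(t)$ by $\int_\tau^t(f^{**}-f^*)\,du/u$ is missing the term $f^{**}(t)-f^*(t)$, since $f^*$ may jump. Your conclusion for comparable $\tau,t$ survives once that term is added.
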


\begin{proof}
For similarity, we only prove \eqref{eq1128-6}. Since for any $t\in (0,\fz)$, $f^{*, Q}_{\ch_\fz^{\dz}}(t)=|f|^{*, Q}_{\ch_\fz^{\dz}}(t)$, we may assume that $f$ is a non-negative function on $Q$. For any given $t\in (0,\fz)$, we define
\[g(x):=\max\lf(f(x)-f^{*, Q}_{\ch_\fz^{\dz}}(t), 0\r),\quad {\rm{and}} \quad h(x):=\min\lf(f(x), f^{*, Q}_{\ch_\fz^{\dz}}(t)\r),\ \forall\ x\in Q.\]
Then $f=g+h$ and, by \eqref{eq0904}, \[\cm^{Q}_{\widetilde\ch_{\fz}^{\dz}}f(x)\leq \cm^{Q}_{\widetilde\ch_{\fz}^{\dz}}g(x)+\cm^{Q}_{\widetilde\ch_{\fz}^{\dz}}h(x),\ \forall\ x\in Q,\]
which, together with (iv) and (viii) of Proposition \ref{2.2}, implies that
\begin{align}\label{eq1127-4}
(\cm^{Q}_{\widetilde\ch_{\fz}^{\dz}}f)^{*, Q}_{\ch_\fz^{\dz}}(t)\leq (\cm^{Q}_{\widetilde\ch_{\fz}^{\dz}}g)^{*, Q}_{\ch_\fz^{\dz}}(t)+(\cm^{Q}_{\widetilde\ch_{\fz}^{\dz}}h)^{*, Q}_{\ch_\fz^{\dz}}(0).
\end{align}
By \eqref{eq122-5}, we have,
\[\ch_{\fz}^{\dz}\lf(\lf\{x\in Q: \cm^{Q}_{\widetilde\ch_{\fz}^{\dz}}g(x)>\frac{CK}{t}\|g\|_{L^{1}(Q,\mathcal{H}^{\delta}_{\infty})}\r\}\r)\leq t,\]
where the constant $C$ comes from \eqref{eq122-5} and $K$ is as in \eqref{eq0723a}. Thus,
\begin{align}\label{eq1128-3}
(\cm^{Q}_{\widetilde\ch_{\fz}^{\dz}}g)^{*, Q}_{\ch_\fz^{\dz}}(t)=\inf\lf\{\lambda>0: \ch_{\fz}^{\dz}\lf(\lf\{x\in Q: \cm^{Q}_{\widetilde\ch_{\fz}^{\dz}}g(x)>\lambda\r\}\r)\leq t\r\}\leq \frac{CK}{t}\|g\|_{L^{1}(Q,\mathcal{H}^{\delta}_{\infty})}.
\end{align}
Moreover, by \eqref{eq122-6}, we get
\begin{align}\label{eq1128-4}
(\cm^{Q}_{\widetilde\ch_{\fz}^{\dz}}h)^{*, Q}_{\ch_\fz^{\dz}}(0)=\inf\lf\{\lambda>0: \ch_{\fz}^{\dz}\lf(\lf\{x\in Q: \cm^{Q}_{\widetilde\ch_{\fz}^{\dz}}h(x)>\lambda\r\}\r)\leq 0\r\}\leq \|h\|_{L^{\fz}(Q,\mathcal{H}^{\delta}_{\infty})}.
\end{align}
Let $$E:=\lf\{x\in Q: f(x)>f^{*, Q}_{\ch_\fz^{\dz}}(t)\r\}.$$
Then, by \eqref{eq1126-10}, we have
\[\|g\|_{L^{1}(Q,\mathcal{H}^{\delta}_{\infty})}=\int_{E}\lf[f(x)-f^{*, Q}_{\ch_\fz^{\dz}}(t)\r]\,d\ch_{\fz}^{\dz}\leq Kt\lf({f^{**, Q}_{\ch_\fz^{\dz}}(t)-f^{*, Q}_{\ch_\fz^{\dz}}(t)}\r).\]
From this, \eqref{eq1128-3}, \eqref{eq1128-4} and \eqref{eq1127-4}, we deduce that
\[(\cm^{Q}_{\widetilde\ch_{\fz}^{\dz}}f)^{*, Q}_{\ch_\fz^{\dz}}(t)\leq CK^2\lf({f^{**, Q}_{\ch_\fz^{\dz}}(t)-f^{*, Q}_{\ch_\fz^{\dz}}(t)}\r)+\|h\|_{L^{\fz}(Q,\mathcal{H}^{\delta}_{\infty})}\leq CK^2\lf({f^{**, Q}_{\ch_\fz^{\dz}}(t)-f^{*, Q}_{\ch_\fz^{\dz}}(t)}\r)+f^{*, Q}_{\ch_\fz^{\dz}}(t). \]
Hence, $(\cm^{Q}_{\widetilde\ch_{\fz}^{\dz}}f)^{*, Q}_{\ch_\fz^{\dz}}(t)$ is finite and
\begin{align}\label{eq1128-5}
(\cm^{Q}_{\widetilde\ch_{\fz}^{\dz}}f)^{*, Q}_{\ch_\fz^{\dz}}(t)-f^{*, Q}_{\ch_\fz^{\dz}}(t)\leq CK^2\lf({f^{**, Q}_{\ch_\fz^{\dz}}(t)-f^{*, Q}_{\ch_\fz^{\dz}}(t)}\r).
\end{align}
Additionally, based on (iv) of Proposition \ref{2.2} and (iii) of Remark \ref{rem122-2}, we know that
\begin{align*}
(\cm^{Q}_{\widetilde\ch_{\fz}^{\dz}}f)^{*, Q}_{\ch_\fz^{\dz}}(t)-f^{*, Q}_{\ch_\fz^{\dz}}(t)\ge 0.
\end{align*}
Therefore, by \eqref{eq1128-5}, we conclude that
\begin{align*}
(\cm^{Q}_{\widetilde\ch_{\fz}^{\dz}}f)^{**, Q}_{\ch_\fz^{\dz}}(t)-(\cm^{Q}_{\widetilde\ch_{\fz}^{\dz}}f)^{*, Q}_{\ch_\fz^{\dz}}(t)&\leq \lf[(\cm^{Q}_{\widetilde\ch_{\fz}^{\dz}}f)^{**, Q}_{\ch_\fz^{\dz}}(t)-f^{**, Q}_{\ch_\fz^{\dz}}(t)\r]+\lf[f^{**, Q}_{\ch_\fz^{\dz}}(t)-f^{*, Q}_{\ch_\fz^{\dz}}(t)\r]\\
&=\frac{1}{t}\int_{0}^{t}\lf[(\cm^{Q}_{\widetilde\ch_{\fz}^{\dz}}f)^{*, Q}_{\ch_\fz^{\dz}}(s)-f^{*, Q}_{\ch_\fz^{\dz}}(s)\r]\,ds+\lf[f^{**, Q}_{\ch_\fz^{\dz}}(t)-f^{*, Q}_{\ch_\fz^{\dz}}(t)\r]\\
&\leq (CK^2+1)\sup_{0<s\leq t}\lf\{f^{**, Q}_{\ch_\fz^{\dz}}(t)-f^{*, Q}_{\ch_\fz^{\dz}}(t)\r\}\leq \|f\|_{wL^\fz(Q, \ch_{\fz}^{\dz})},
\end{align*}
which implies that \eqref{eq1128-6} holds. We complete the proof of Theorem \ref{tm1127-2}.
\end{proof}

\section*{Data availability}

No data was used for the research described in the article.

\section*{Declaration}

There is no financial or non-financial interests that are directly or indirectly related to
the work submitted for publication.

\section*{Acknowledgements}

This project is supported by the National Natural
Science Foundations of China (12201139), and the Natural Science Foundation of Hunan province (2024JJ3023).

\medskip
\medskip
\noindent Hao Guo, Yangzhi Zhang (Corresponding author) and Ciqiang Zhuo
	
\smallskip
	
\noindent MOE-LCSM, School of Mathematics and Statistics,
Hunan Normal University,
Changsha, Hunan 410081, China
	
\smallskip
\noindent{\it E-mails:}
\texttt{haoguo@hunnu.edu.cn (H. Guo)}

\noindent\phantom {\it E-mail}
\texttt{yzzhang@hunnu.edu.cn (Y. Zhang)}

\noindent\phantom {\it E-mail}
\texttt{cqzhuo87@hunnu.edu.cn (C. Zhuo)}

\medskip
\medskip

\noindent Long Huang

\medskip

\noindent School of Mathematics and Information Science,
Guangzhou University, Guangzhou, 510006, China

\smallskip

\noindent {\it E-mail}: \texttt{longhuang@gzhu.edu.cn}

%
%
%
%

\end{document}